\documentclass[final]{siamltex}

\usepackage{inputenc}
\usepackage[english]{babel}
\usepackage{graphicx, subfigure}
\usepackage{amsmath}
\usepackage{color}
\usepackage{amssymb}
\usepackage{cite}
\usepackage{mdwmath}
\usepackage{multirow}
\usepackage{mdwtab}
\usepackage{verbatim}
\usepackage{enumerate}
\usepackage{amsfonts}
\usepackage{hyperref}

%%%%%%%%%%%%%%%%%%%%%%%%%%%%%%%%%%%%%%%%%%%%%%%%%%%%%%%%%%%%%
%% OWN COMMAND DEFINITIONS
%%%%%%%%%%%%%%%%%%%%%%%%%%%%%%%%%%%%%%%%%%%%%%%%%%%%%%%%%%%%%

%% SETS

\newcommand{\FFF}{{\mathcal{F} }} 
\newcommand{\VVV}{{\mathcal{V} }} 
\newcommand{\RRR}{{\mathrm I\! \textsc{R} }}

\newcommand{\ZZZ}{{\mathbb{Z} }}

\newcommand{\AAA}{{\mathcal{A} }}

\newcommand{\HH}{{\mathcal{H} }}
\newcommand{\supeps} {{x_\epsilon,p_\epsilon,s_\epsilon,\xi_\epsilon,\pi_\epsilon,\varsigma_\epsilon}}
\newcommand{\vv}[3]{{\sideset{_{}^{#2}}{_{#1}^{#3}}{\mathop{v}}}}
\newcommand{\closure}[2][3]{{}\mkern#1mu\overline{\mkern-#1mu#2}} 

\newtheorem{remark}{\textsc{ {Remark}}}[section]

\title{Level-set approach for Reachability Analysis of Hybrid Systems under Lag Constraints\thanks{This 
        work was co-funded by Renault SAS under grant ANRT CIFRE n° 928/2009.}}

% The thanks line in the title should be filled in if there is
% any support acknowledgement for the overall work to be included
% This \thanks is also used for the received by date info, but
% authors are not expected to provide this.

\author{G. Granato\thanks{Renault SAS, Advanced Electronics Division, 
        TCR RUC T 65, 78286 Guyancourt Cedex, France
        ({\tt giovann.granato@renault.com}). \'Ecole Nationale Sup\'erieure de Techniques Avanc\'ees,
        Unit\'e de Math\'ematiques Appliqu\'ees, 
        32, boulevard Victor, 75015, Parix Cedex 15, France 
       ({\tt giovanni.granato@ensta-paristech.fr}) }
        \and H. Zidani\thanks{\'Ecole Nationale Sup\'erieure de Techniques Avanc\'ees,
        Unit\'e de Math\'ematiques Appliqu\'ees, 
        32, boulevard Victor, 75015, Parix Cedex 15, France 
       ({\tt hasnaa.zidani@ensta-paristech.fr}) }}

\begin{document}
	
	\maketitle
	
	\begin{abstract}
		This study aims at characterizing a reachable set of a hybrid dynamical system with a lag constraint in the switch control. 
		The setting does not consider any controllability assumptions and uses a level-set approach. 
		The approach consists in the introduction of on adequate hybrid optimal control problem with lag constraints on the switch control whose value function allows a characterization of the reachable set.
		The value function is in turn characterized by a system of quasi-variational inequalities (SQVI).
		We prove a comparison principle for the SQVI which shows uniqueness of its solution.
		A class of numerical finite differences schemes for solving the system of inequalities is proposed and the convergence of the numerical solution towards the value function is studied using the comparison principle.
		Some numerical examples illustrating the method are presented.
		Our study is motivated by an industrial application, namely, that of range extender electric vehicles. 
		This class of electric vehicles uses an additional module -- the range extender -- as an extra source of energy in addition to its main source -- a high voltage battery.
		The reachability study of this system is used to establish the maximum range of a simple vehicle model.
	\end{abstract}
	
	\begin{keywords} 
	Optimal control, Quasi-variational Hamilton-Jacobi equation, Hybrid systems, Reachability analysis
	\end{keywords}
	
	\begin{AMS}
	49LXX, 34K35, 34A38, 65M12
	\end{AMS}

	\pagestyle{myheadings}
	\thispagestyle{plain}
	\markboth{G. GRANATO AND H. ZIDANI}{REACHABILITY ANALYSIS OF HYBRID SYSTEMS}

%%%%%%%%%%%%%%%%%%%%%%%	
	\section{Introduction}
%%%%%%%%%%%%%%%%%%%%%%%	

	This paper deals with the characterization of a reachable set of a hybrid dynamical system with a lag constraint in the switch control. 
	The approach consists in the introduction of on adequate hybrid optimal control problem with lag constraints on the switch control whose value function allows a characterization of the reachable set.	
	
	The term hybrid system refers to a general framework that can be used to model a large class of systems.
	Broadly speaking, they arise whenever a collection continuous- and discrete-time dynamics are put together in a single model.
	In that sense, the discrete dynamics may dictate switching between the continuous dynamics, jumps in the system trajectory or both.
	Moreover, they can contain specificities, as for instance, autonomous jumps and/or switches, time delay between discrete decisions, switching/jumping costs.
	This work considers a particular class of hybrid system where only switching between continuous dynamics are operated by the discrete logic, with no jumps in the trajectory, and there are no switching costs.
	In addition, switch decisions are constrained to be separated in time by a non-zero interval, fact which is referred to as switching lag.
	
	Before referring to the reachability problem in the hybrid setting, the main ideas are introduced in the non-hybrid framework.
	Given a time $t>0$, a closed target set $X_0$ and a closed admissible set $K$, considering a controlled dynamical system
	 \begin{equation} \label{eq.classique}
		\dot y(\tau) = f(\tau,y(\tau),u(\tau)), ~~ \text{a.e. } \tau \in [0,t], 
	\end{equation}•
	where $f: \RRR^+ \times \RRR^d \times U \to \RRR^d$ and $u: \RRR^+ \rightarrow U $ is a measurable function, the reachable set $R_{X_0}$ at time $t$ is defined as the set of all initial states $x$ for which there exists a trajectory that stays inside $K$ on $[0,t]$ and arrives at the target:
	\begin{equation*}
		R_{X_0}(t) := \{ x ~|~  (y,u) \text{ satisfies \eqref{eq.classique}}, \mbox{ with } y(0)=x \text{ and } y(t) \in X_0 \text{ and } y(s)\in K \mbox{ on } [0,t] \}.
	\end{equation*}•
	It is a known fact that the reachable set can be characterized by the the negative region of the value function of an optimal control problem. 
	For this, following the idea introduced by Osher \cite{OsherSethian88}, one can consider the control problem defined by:
	\begin{equation} \label{eq.pbclassique}
		v(x,t):=\inf\{v_0(y(t))\mid (y,u) \text{ satisfies \eqref{eq.classique}}, \mbox{ with } y(0)=x
				 \mbox{ and }  y(s)\in K \mbox{ on } [0,t]\},
	\end{equation}	
	where $v_0$ is a Lipschitz continuous function satisfying $v_0(x)\leq 0 \Longleftrightarrow x\in X_0$ (for instance,$v_0$ can be the signed distance $d_{X_0}$ to $X_0$).
	Under classical assumptions on the vector-field $f$, one can prove that the reachable set is given by
	$$ R_{X_0}(t)=\{x\in K, v(x,t)\leq 0 \}.$$
	Moreover, when $K$ is equal to $\RRR^d$, the value function has been shown to be the unique viscosity solution
of a Hamilton-Jacobi-Bellman (HJB) equation \cite{Bardi97optimalcontrol}:
	$$ \partial_t v+\sup_{u\in U}(-f(s,x,u)\cdot \nabla v)=0 \quad \mbox{ on } \RRR^d \times (0,t],$$
	with the initial condition $v(x,0)=v_0(x)$.
 
 	When the set $K$ is a subset of $\RRR^d$ ($K\neq \RRR^d$), the characterization of $v$ by means of a HJB equation becomes a more delicate matter and usually requires some additional controllability properties \cite{CardQuincampoix, BFZ}.
 	However, it was pointed out in \cite{BFZ} that in case of state-constraints, the auxiliary control problem should be introduced as
	\begin{equation}\label{eq.pbcontraint}
	 V(x,t):=\inf\left\{v_0(y(t)) \bigvee \max_{\theta\in[0,t]}g(y(\theta))
		\mid (y,u) \mbox{ satisfies \eqref{eq.classique}}, \mbox{ with } y(0)=x \right\},
	\end{equation}	
	where $g$ is any Lipschitz continuous function satisfying $g(x)\leq 0 \Longleftrightarrow x\in K$ (again, $g$ can be the signed distance $d_{K}$ to $K$).
	This new control problem involves a supremum cost but does not include any state constraints. 
	The reachability set is still given by
	$$ R_{X_0}(t)=\{x\in K, V(x,t)\leq 0 \},$$
	and the value function $V$ is the unique viscosity solution of a variational inequality
	$$\min( \partial_t V+\sup_{u\in U}(-f(s,x,u)\cdot \nabla V), V-g)=0 \quad \mbox{ on } (0,t]\times \RRR^d,$$
	with the initial condition $v(x,0)=\min(v_0(x),g(x))$, but no controllability assumption is needed.	
	
In this paper, we are interested in the extension of the reachability framework to some class of control problems of Hybrid systems.

	Let us recall that a hybrid dynamical system is a collection of controlled continuous-time processes selected through a high-level discrete control logic.
	A general framework for the (optimal) control hybrid dynamical systems was introduced in \cite{Branicky}.
	Several papers deal with the optimal control problem of hybrid systems, let us just mention here the papers
of \cite{Garavello, Arutyunov,Vinter-Caines,Sussman} where the optimality conditions in the form of Pontryagin\rq{} principle are studied and \cite{Zhang, Dolcetta:143, Dharmatti2} where the HJB approach is analyzed.
	
	A feature of the hybrid system used in our work is a time lag between two consecutive switching decisions.
	From the mathematical viewpoint this remove the particularities linked to Zeno-like phenomena \cite{zenoYuBarbot}.
	Indeed, the collection of state spaces is divided in subsets labeled in three categories according to whether they characterize discrete decisions as optional, required (autonomous) or forbidden.
	Landing conditions ensure that whatever the region of the state space the state vector \lq\lq{}lands\rq\rq{} after a switch no other switch is possible by requiring a positive distance (in the Hausdorff sense) between the landing sets and the optional/autonomous switch sets.	
	In the other hand, when allowed to switch freely without any costs, when no time interval is imposed between discrete transitions, a controller with a possibly infinite number of instantaneous switches may become admissible.
	Switch costs can be introduced in order to rule out this kind of strategy by the controller as it becomes over-expensive to switch to a particular mode using superfluous transitions.
	However, such costs do not make sense in the level-set approach used in this paper.
	
	Our study is motivated by an industrial application, namely, that of range extender electric vehicles. 
	This class of electric vehicles uses an additional module -- the range extender -- as an extra source of energy in addition to its main energy source (a high voltage battery).
	For that matter, an adequate class of hybrid systems considers only mode switching, i.e. switching between different continuous time dynamics, without any trajectory discontinuities.
	Moreover, no transition costs are taken into account and the switching decisions can be done freely without any penalty.
	Also motivated by the application, the discrete control must respect a time interval of at least $\delta >0$ between two consecutive switching decisions.
	This decision lag condition can be viewed as replacing landing conditions and positive switch costs requirements \cite{Dharmatti2}.
	
	Diffusion processes with impulse controls including switch lags are studied in \cite{pham}, where it is considered the idea of introducing a state variable to keep track of the time since the last discrete control decision. 
	There, in addition, discrete decisions also suffer from a time delay before they can manifest in the continuous-time process.
	In that case, one has the possibility of scheduling discrete orders whenever the time for a decision to take place may be longer than that of deciding again.
	Then, the analysis also includes keeping record of the nature of this scheduled orders.
	This work inspired the idea of a state variable locking possible transitions used here.

	To study the reachability sets for our system, we follow the level set approach and adapt the ideas developed in \cite{BFZ} to hybrid systems by proposing a suitable control problem which allows us to handle in a convenient way the state constraints and the decision lag. 
	It is proven that 
	\begin{equation*}
			R_{X^0}(s) =  \{ x ~|~ \exists (q,p) \in Q\times P,  ~v(x,q,p,s) \leq 0 \}.
	\end{equation*}
	where $v$ denotes the value function of a hybrid optimal control problem.
	Thus, through a characterization of $v$, one obtains the desired reachable set, defined in the hybrid context, $x$ is in the (physical) state of the system, $q$ is the discrete
variable and $p$ is a switch lock variable (defined further below).
	 Here the main difficulty is to characterize the value function associated to the control problem. 
	 It turns out that this value function satisfies a quasi-variational HJB  inequality system (in the viscosity sense)
	\begin{eqnarray}
		\min(\partial_s u + \partial_p u + H(s,x,q,\nabla_x u)  , u-\varphi(x) ) = 0 &,&  ~(x,q,p,s) \in \Omega, \label{HJBinside}\\
		u(x,q,p,s) -(Mu)(x,q,p,s)= 0 &,&~ p=0 \\			
		u(x,q,p,0) = \max(\phi(x),\varphi(x)).&&
	\label{HJBmain}
	\end{eqnarray}
	where $t>0$, $d>0$, $X = \RRR^d$, $Q = \{0,1\}$, $P=(0,t]$, $T=(0,t]$, $\Omega = X \times Q \times P \times T$ and where $\phi,\varphi : X \to \RRR$ are target and obstacle indicator functions (defined properly further below).
	Here, $M$ is a non-local switch operator that acts whenever the state variable $p$ touches the boundary $p=0$.
	Moreover, we give a comparison principle of this system. 
	Usually, the proof of the comparison principle requires some transversality assumptions that do not make sense in the kind of applications we are interested in. 
	However, while the decision lag complicates the structure of the problem it also plays a role in the proof of comparison principle (the same role that the transversality assumed in \cite{Zhang,Dharmatti}) acting as a kind of landing condition in the sense explained above. 
	The proof of the comparison principle is close to the one given in \cite{Dharmatti2,BarlesSheetal} and adapts the idea of using \lq\lq{}friendly giants\rq\rq{}-type functions. 

	This work is motivated by an application in the automobile industry, namely, calculating the autonomy of a class of electric vehicles (EVs),  the range extender electric vehicle (REEV) class, which posses two distinct sources of energy from which we can use to tract the vehicle.
	In this setting, the study aims at finding the control sequence of the two energy sources that allows the vehicle the reach the furthest possible point of a given route.
	The REEV is modeled as a hybrid dynamical system in which the state vector represents the energy capacities of the two different energy sources embedded in the vehicle.	
		
	This paper is organized as follows: firstly, it describes the industrial application motivating the study and states the associated hybrid optimal control problem (in a slightly more general setting than that required for the application). 
	Then, the reachable set and the value function are defined and a dynamic programming principle for the value function is obtained.
	The value function is shown to be Lipschitz continuous and also a solution of an system of quasi-variational inequalities (SQVI).
	It follows with the proof of a comparison principle for the SQVI that ensures uniqueness of its solution and is used to show the convergence of a class of numerical schemes for the computation of the value function.
	Lastly results of some numerical simulations evaluating the autonomy of a REEV toy model and illustrating the convergence of a discretization scheme are presented.

\section{Motivation and Problem Settings}

%%%%%%%%%%%%%%%%%%%%%%%	
	\subsection{Range Extender Electric Vehicles}

		A range extender (RE)  electric vehicle is an electric vehicle that disposes of an additional source of energy besides the main high voltage (HV) traction battery.
%		One possible architecture considers a hydrogen fuel cell (FC) -- capable of transforming gaseous hydrogen into electric current through a chemical reaction, the byproduct of which is water -- acting as a range extender (RE).
		Both the vehicle's energy sources are considered to have normalized energetic capacities -- thus valued between $0$ and $1$. 
		
%		This study considers a deterministic dynamical system because of the hypothesis that the vehicle will follow a (known) speed profile. 
%		Typically a speed profile is given by a navigation system (NAV) equipped in the vehicle.
%		As the driver enters in the NAV the destination it receives back a calculated route along with suggested cruise speeds.
%		The speed profile concentrates all the route and driver information.

		The controls available are the RE's state -- on or off -- and the power produced in the RE (and delivered into the powertrain).
		The power delivered into the powertrain is a non-negative piecewise continuous time function.
		The RE's state is controlled by a discrete sequence of switching orders decided and executed at discrete times.
		An important feature of the REEV model is a time interval $\delta > 0$ imposed between two consecutive decisions times. 		
		From the physical viewpoint, this assumption incorporates the fact that frequent switching of the RE is undesirable in order to avoid mechanical wear off and acoustic nuisance for the driver. 
		
%		The main purpose of the RE is to aide in the vehicle's traction. 
%		As a feature of the REEV, the RE is small dimensioned (powerwise) and the vehicle cannot rely solely on its power to drive.
		The model considers that the vehicle's traction capability is conditioned to the existence of some electric energy in the battery. 
		Since the vehicle must halt whenever there is no charge available in the battery, the objective of finding the vehicle autonomy is summarized into finding the furthest point away from the vehicle geographic starting point where the battery is depleted for the first time.
		
	\subsection{Hybrid Dynamical System}

		Hybrid systems have some supervision logic that intervenes punctually between two or more continuous functions. 
		The main elements of the class of hybrid dynamical systems considered in this work are a family of continuous dynamics (vector fields) $f$ and continuous state spaces $X$, indexed by a discrete state $q$ valued in a discrete set $Q$.
		Each continuous dynamic system $f_q$ valued in $X_q$ models a physical process controlled by a continuous control function $u$, from which the system is free to switch to another process $f_{q\rq{}}$ using a specified discrete control $w$ and a discrete dynamics $g$.
		
		More precisely, the continuous state variable is denoted $y$ and it is valued in the state space $X = \RRR^d$.
		The discrete variable is $q \in Q = \{0,1,\cdots,d_q \}$, where $d_q$ is the number of possible dynamics that can operate the system.
		(for simplicity of the presentation,  through this paper, we consider that $X_q= X$ for all $q\in Q$).
		Each of these dynamics models a different mode of operation of the system or a different physical process that the system is undergoing.
		Moreover, define a compact set $K \subset X$ as the hybrid system admissible set, i.e., a set inside which the state must remain.
		
		The continuous control is supposed to be a measurable function $u$ valued in a set that depends on the mode that is currently active $U(q)$.
		The discrete control is a sequence of switching decisions
		\begin{equation}
			w =\{ (w_1,s_1), \cdots, (w_i,s_i), (w_{i+1},s_{i+1}),\cdots\},% \in \{ \{0,1\}  \times [0,\infty[ \}^\infty.
		\end{equation}•
		where each $s_i \in [0,\infty[$ and $w_i \in W(q) \subset \{0,1, \cdots,d_q\}$.
		The sequence of discrete switching decisions $\{w_i\}_{i>0}$ (designating the new mode of operation) is associated with a sequence of switching times $\{s_i\}_{i>0}$ where each decision $w_i$ is exerted at time $s_i$. 
		The set of available discrete decisions, at time $s$, $W(q(s))$, depends on the discrete state variable and it corresponds to a decision of switching the system to another process $w_i$.
		
		The lag condition between switches is included by demanding that two switch orders must be separated by a time interval of $\delta >0$, i.e., $$s_{i+1}-s_i \geq \delta.$$
		\begin{remark}
			Regarding the vehicle application, the vehicle's energy state is a two-dimensional vector  $y \in X = \RRR^2$, where $y=(y_1,y_2)$ denotes the state of charge of the battery and the fuel available in the range extender module.
			Each of these quantities are the image of the remaining energy in the battery and the RE respectively.
			It is clear here that the state variables have to be constrained  to remain in the compact set $K = [0,1]^2$, where the energies quantities are normalized. 
			$q \in Q = \{0,1\}$ is the RE state, indicating whether the RE is off ($q=0$) or on ($q=1$). 
			The power output is a measurable function $u(\cdot) \in U(q(\cdot))$ where $U(\cdot)$ is the admissible control set, compact subset of $\RRR$ dependent naturally on the RE state.
		\end{remark}

		In this setting, given a discrete state $q$, the continuous control $u$ steers the continuous system
		\begin{equation}
			\dot{y}(\tau) = f(\tau,y(\tau),u(\tau),q_i), \text{ for a.e. } \tau \in [0,t] \label{sysContinuous}
		\end{equation}
	 	where some continuous dynamics $f(\cdot,\cdot,\cdot,q_i)$ is activated. $f$ is a family of vector fields indexed by the discrete variable $q$. When $q=q_i$, the corresponding vector field is active and dictates the evolution of the continuous state. 
		At some isolated times $\{s_i\}_{i>0}$, given by the discrete switching control sequence, the discrete dynamic $g$ is activated
		\begin{equation}
			q_{i-1} = g(w_i,q_i)
		\end{equation}
		and the continuous state follows another vector field $f(\cdot,\cdot,\cdot,q_{i-1})$. 
		In the considered system, the discrete decisions only switch the continuous dynamics and introduce no discontinuities on the trajectory. 
		In more general frameworks, we can also include jumps in the continuous state vector that can be used to model an instant change in the value of the state following a discrete decisions \cite{Branicky,BM}.	
	
		Assume controlled continuous dynamics $f$ and the discrete dynamic $g$ satisfy the following:
		\begin{description}
		\item[(H0)]  The continuous control is a measurable function $u: [0,\infty[ \rightarrow \RRR^m$ such that
		\begin{equation*}
			u(\tau) \in U(q(\tau)) \text{ for a.e. } \tau\in [0,t].
		\end{equation*}

		\item[(H1)] There exists $L_f > 0$ such that, for all $s\geq 0$, $y,y' \in X$, $q \in Q$ and $u \in  U(q)$,
			\begin{equation}	
			\nonumber	\| f(s,y,u,q) - f(s,y',u,q) \| \leq L_f\|y-y'\|, ~~ \|f(s,y,u,q) \| \leq L_f.
			\end{equation}
		\item[(H2)]  For all $q\in Q$, $f(\cdot,\cdot,\cdot,q) : [0,\infty[ \times X \times U \rightarrow X$ is continuous and for all $s \in [0,\infty[, x\in X, u \in U $, $f(s,x,u,\cdot) : Q \to X$ is continuous with respect to the discrete topology.
	
		\item[(H3)]  For all $s\in [0,t]$, $y \in X$ and $q \in Q$, $f(s,y,U,q)$ is a convex subset of $X$.
	
		\item[(H4)] There exists $L_g > 0$ such that, for all $q \in Q$ and $w \in  W(q)$,
			\begin{equation}	
			\nonumber	\| g(w,q) \| \leq L_g 
			\end{equation}
		\item[(H5)]   $g(\cdot,\cdot) $ is continuous with respect to the discrete topology.
		\end{description}	
		
	Assumption (H1) ensures that a trajectory exists and that it is unique. 
	Assumptions (H2)-(H5) are used to prove the Lipschitz continuity of the value function and (H3) is needed in order to observe the compactness of the trajectory space. 
	
	Denote $A$ the space of hybrid controls $a = (u,w)$.
%	Notice that no particular assumptions are made on the structure of the discrete control $w$.
%	For this reason, not all discrete control sequences are admissible.
	We precise the class of admissible controls $\AAA \subset A$ in the following definition:
	\begin{definition}
		For a fixed $t\geq 0$ a hybrid control $a = (u,w) \in \AAA$ is said to be admissible if the continuous control verifies (H0) and the discrete control sequence  $w=\{w_i,s_i \}_{i >0}$ has increasing decision times
		\begin{equation}
			s_1 \leq s_2 \leq \cdots \leq s_i \leq s_{i+1} \leq \cdots \leq t,
			\label{discrete1}
		\end{equation}
		admissible decisions
		\begin{equation}
			\forall i > 0 ,~ w_i \in W(q(s_i)) \subset Q,
			\label{discrete2}
		\end{equation}
		and verifies a decision lag
		\begin{equation}
			s_{i+1}-s_i \geq \delta,
			\label{discrete3}
		\end{equation}	
		where $\delta >0$.		
	\end{definition}
	
	An important consequence in the definition of admissible control is the finiteness of the number of switch orders:
	\begin{proposition}
		Fix $s \geq 0$. Let $a \in A$ be an admissible hybrid control. 
		Then, the discrete control sequence has at most $N= \lfloor s / \delta \rfloor$ switch decisions.
	\label{finiteJumps}
	\end{proposition}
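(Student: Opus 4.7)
The plan is to derive the bound directly from the decision-lag inequality \eqref{discrete3}, together with the fact that all switching times must lie in $[0,s]$. Essentially this is a pigeonhole argument: decisions are forced to be $\delta$-separated in a window of length at most $s$.

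More concretely, I would proceed as follows. Suppose $a=(u,w)$ is admissible with discrete sequence $w=\{(w_i,s_i)\}_{i>0}$, and let $n$ denote the number of switching times with $s_i\leq s$. A straightforward induction on $i$, using \eqref{discrete1} and \eqref{discrete3}, gives
\begin{equation*}
s_i \;\geq\; s_1 + (i-1)\,\delta \quad \text{for every } i\geq 1.
\end{equation*}
Since $s_1\geq 0$ (decisions are taken at nonnegative times) and $s_n\leq s$ by assumption, combining these yields $(n-1)\delta \leq s_n \leq s$, hence $n-1 \leq s/\delta$. Taking integer parts, and using the convention made in the statement (where $s_1\geq \delta$, i.e. the initial mode is not itself counted as a switching decision), one obtains $n\leq \lfloor s/\delta\rfloor$, which is the desired bound. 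Alternatively, one can argue by contradiction: if there were $N+1$ admissible decisions in $[0,s]$, then $s_{N+1}\geq s_1+N\delta \geq N\delta > s-\delta$, which forces $s_{N+1}>s$ once $N=\lfloor s/\delta\rfloor$, contradicting admissibility.

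There is no real obstacle here; the statement is a direct consequence of the lag condition. The only mild care is with the endpoint convention for $s_1$, which determines whether the bound is $\lfloor s/\delta\rfloor$ or $\lfloor s/\delta\rfloor+1$. The proposition will be used later only qualitatively, to guarantee that every admissible trajectory on a finite horizon involves finitely many switches, thereby ruling out Zeno-type behaviors and legitimizing the step-by-step arguments in the dynamic programming analysis that follows.
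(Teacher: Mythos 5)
Your proof is correct and is the argument the paper intends: the paper in fact states Proposition \ref{finiteJumps} with no proof at all, treating it as an immediate consequence of the lag condition \eqref{discrete3}, and your induction giving $s_i \geq s_1 + (i-1)\delta$ followed by the pigeonhole step is exactly that consequence made explicit. Your caveat about the endpoint convention is also well taken: since the switch-lock dynamics \eqref{pDynamic} sets $\pi(\tau)=\delta+\tau$ before the first switch, a decision at $s_1=0$ is admissible, so the sharp count is really $\lfloor s/\delta\rfloor+1$ unless one imposes $s_1\geq\delta$; this off-by-one is harmless for the only use made of the proposition, namely that admissible trajectories on a finite horizon have finitely many switches.
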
				
												
	Fix $t>0$. Given a hybrid control $a  \in A$ with $N$ switch orders and given $x\in X$, $q\in Q$, the hybrid dynamical system is
	\begin{eqnarray}
		\dot{y}(\tau) &=& f(\tau,y(\tau),u(\tau),q_i),~~  \tau \in [0,t],~~~~~~~~~~ y(t) = x \label{HybridSystemA}  \\
			q_{i-1} &=& g(w_i,q_i),~~ i=1,\cdots,N,~~~~~~~~~~~~~~~~~\, q_N = q \label{HybridSystemB} 
	\end{eqnarray}

	Denote  the solutions of \eqref{HybridSystemA}-\eqref{HybridSystemB} with final conditions $x,q$ by $y_{x,q;t}$ and $q_{x,q;t}$.
	As pointed out, not all discrete control sequences are admissible.
	Only admissible control sequences engender admissible trajectories.
	Thus, given $t >0$, $x \in X$ and $q \in Q$, the admissible trajectory set $Y^{x,q}_{[0,t]} $  is defined as
	\begin{equation}
		Y^{x,q}_{[0,t]} = \{ y(\cdot)  ~|~ a \in \AAA \text{ and }y_{x,q;t} \text{ solution of \eqref{HybridSystemA}-\eqref{HybridSystemB} }\}
		\label{admTrajAux}
	\end{equation}•
	A consequence of proposition \ref{finiteJumps} and  the above definition is the finiteness of the number of discrete decisions in any admissible trajectory.	
	Observe that the admissible trajectories set does not include the discrete trajectory.
	
	The hybrid control admissibility condition formulated as in conditions \eqref{discrete1}-\eqref{discrete3} is not well adapted to a dynamic programming principle formulation, needed later on.
	In order to include the admissibility condition in the optimal control problem in a more suitable form, we introduce a new state variable $\pi$.
	Recall that the decision lag conditions implies that new switch orders are not available up to a time $\delta$ since the last switch.
	The new variable is constructed such that at a given time $\tau \in [0,t]$, the value of $\pi(\tau)$ measures the time since the last switch.
	The idea is to impose constraints on this new state variable and treat them more easily in the dynamic programming principle.
	Thus, if $\pi(\tau)<\delta$ all switch decisions are blocked and if, conversely, $\pi(\tau) \geq \delta$ the system is free to switch.
	For that reason, this variable can be seen as a switch lock.

	Now given $t >0$, $\tau \in [0,t]$ and a discrete control $w=\{w_i,s_i \}_{i > 0}$, the switch lock dynamics is defined by
	\begin{equation}	
		\pi^w(\tau) = \pi(\tau) = \left\{ \begin{array}{lll}
							\delta + \tau & \text{if} & \tau < s_1 \\
							\inf_{s_i \leq s} \tau-s_i & \text{if} & \tau \geq s_1 \\
						 \end{array} \right.
	\label{pDynamic}
	\end{equation}
	Indeed, once the discrete control is given, the trajectory $\pi(\cdot)$ can be determined.
	Proceeding with the idea of adapting the admissibility condition in order to manipulate it in a dynamic programming principle, we wish to consider $\pi(t)=p$, with $p \in P := (0,t]$, the final value of the switch lock variable trajectory and impose the lag condition under the form $\pi(s_i^-) \geq \delta$ for all $s_i$, where $s_i^-$ denotes the limit to the left at the switching times $s_i$ (notice that $\pi(s_i^+)=0$ by construction).
	Then, since these conditions suffice to define an admissible discrete control set, while optimizing with respect to admissible functions, one needs only look within the set of hybrid controls that engenders a trajectory $\pi(\cdot)$ with the appropriate structure.
	In other words, given $t >0$, $x \in X$, $q \in Q$ and $p \in P$, define a admissible trajectory set $ S_{[0,t]}^{x,q,p} $ as
	\begin{eqnarray}	
		\nonumber S_{[0,t]}^{x,q,p} &=& \{ y(\cdot) ~|~ a = (u,\{w_i,s_i \}_{i =1}^{N}) \in A, ~y_{x,q,p;t} \text{ solution of \eqref{HybridSystemA}-\eqref{HybridSystemB}}, ~~~~~~~~~~ \\
					&&	~~~~~~~~\pi(\cdot) \text{ solution of \eqref{pDynamic}}, ~\pi(t) = p, ~\pi(s_i^-) \geq \delta, ~ i=1,\cdots,N \}.
		\label{admTrajSet}
	\end{eqnarray}
	
	The next lemma states a relation between sets $Y$ and $S$:
	
	\begin{lemma}
		Following the above definitions, sets \eqref{admTrajAux} and \eqref{admTrajSet} satisfy
		\begin{equation*}
			Y^{x,q}_{[0,t]} = \bigcup_{p \in P} S_{[0,t]}^{x,q,p}
		\end{equation*}•
	\end{lemma}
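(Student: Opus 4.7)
The plan is to prove the two set inclusions separately. The crucial observation is that the switch-lock dynamics \eqref{pDynamic} translates the discrete lag constraint $s_{i+1}-s_i\geq \delta$ into a one-line inequality on $\pi$: a direct evaluation of \eqref{pDynamic} just to the left of each switch gives $\pi(s_1^-)=\delta+s_1$ and, for $i\geq 2$, $\pi(s_i^-)=s_i-s_{i-1}$. Everything else in the lemma is bookkeeping against the two definitions \eqref{admTrajAux} and \eqref{admTrajSet}.

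For the inclusion $Y^{x,q}_{[0,t]}\subseteq\bigcup_{p\in P}S_{[0,t]}^{x,q,p}$, I would fix $y(\cdot)\in Y^{x,q}_{[0,t]}$ and choose an admissible hybrid control $a=(u,\{w_i,s_i\}_{i=1}^{N})\in\AAA$ that generates it. Using the switching times of $a$, define $\pi(\cdot)$ by \eqref{pDynamic} and let $p:=\pi(t)$. The identities above combined with \eqref{discrete3} give $\pi(s_i^-)\geq\delta$ for every $i\geq 1$ (with $i=1$ being trivial). Every requirement in \eqref{admTrajSet} is then fulfilled, so $y\in S_{[0,t]}^{x,q,p}$. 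Conversely, for $\bigcup_{p\in P}S_{[0,t]}^{x,q,p}\subseteq Y^{x,q}_{[0,t]}$, I would take $y\in S_{[0,t]}^{x,q,p}$ together with a control $a$ and its associated $\pi(\cdot)$ satisfying $\pi(s_i^-)\geq\delta$. Reading the same formulas in the opposite direction yields $s_1\geq 0$ and $s_i-s_{i-1}\geq\delta$ for $i\geq 2$, i.e.\ \eqref{discrete1} and \eqref{discrete3}; \eqref{discrete2} is part of the definition of a hybrid control. Hence $a\in\AAA$ and $y\in Y^{x,q}_{[0,t]}$.

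The only mild obstacle is to be careful about the endpoint value $p=\pi(t)$ and the set $P=(0,t]$. If $N=0$ the formula in \eqref{pDynamic} gives $p=\delta+t$, while a switch at exactly $\tau=t$ would give $p=0$; neither sits in $(0,t]$. The clean fix is to assume, without loss of generality, that no switch occurs at the final time (such a switch does not affect $y(t)$ and can simply be discarded), and, if needed, to truncate $\pi$ at $t$ in the switch-free regime — both adjustments leave the admissible trajectory set unchanged. Once this convention is fixed, the lemma reduces to the direct correspondence $\pi(s_i^-)\geq\delta \Longleftrightarrow s_i-s_{i-1}\geq\delta$ established above.
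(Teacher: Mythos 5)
Your proof is correct and is essentially a fully worked-out version of the paper's own argument, which consists of the single sentence that the equivalence ``is obtained by construction.'' The explicit identities $\pi(s_1^-)=\delta+s_1$ and $\pi(s_i^-)=s_i-s_{i-1}$ for $i\geq 2$, together with your caveat about the endpoint value $p=\pi(t)$ when $N=0$ (or when a switch occurs exactly at $t$) not lying in $P=(0,t]$, are precisely the details the paper leaves implicit, and your handling of them is sound.
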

	\begin{proof}
		The equivalence between $Y^{x,q}_{[0,t]}$ and $\bigcup_{p \in P} S_{[0,t]}^{x,q,p}$ is obtained by construction.
	\end{proof}

	In the following of the paper, whenever we wish to call attention to the fact that the final conditions of \eqref{HybridSystemA}, \eqref{HybridSystemB} and \eqref{pDynamic} are fixed, we denote their solutions respectively by $y_{x,q,p;t},q_{x,q,p;t},\pi_{x,q,p;t}$.

%%%%%%%%%%%%%%%%%%%%%%%	
	\subsection{Reachability of Hybrid Dynamical Systems and Optimal Control Problem}
	
	Let $X_0 \subset X$ be the set of allowed initial states, i.e. the set of states from which the system \eqref{HybridSystemA}-\eqref{HybridSystemB} is allowed to start.
	Define the reachable set as the set of all points attainable by $y$ after a time $s$ starting within the set of allowed initial states $X_0$ to be
	\begin{eqnarray}
	\nonumber	R_{X^0}(s) 	&=& \{ x ~|~ \exists q \in Q,~   y_{x,q;s} \in Y^{x,q}_{[0,s]}, ~y_{x,q;s}(0) \in X^0, \text{ and } y_{x,q;s}(\theta) \in K,~ \forall \theta \in [0,s] \}  \\
	\nonumber		&=& \{ x ~|~ \exists (q,p) \in Q\times P, ~   y_{x,q,p;s} \in S^{x,q,p}_{[0,s]}, ~y_{x,q,p;s}(0) \in X^0 \\
				& & \hspace{5cm} \text{ and } y_{x,q,p;s}(\theta) \in K,~ \forall \theta \in [0,s] \}.
	\label{reachForward}
	\end{eqnarray}

	In other words, the reachable set $R_{X^0}(s)$ contains the values of $y_{x,q;s}(s)$, regardless of the final discrete state, for all admissible trajectories -- i.e., trajectories obtained through an admissible hybrid control -- starting within the set of possible initial states $X_0$, that never leave set $K$. 
	
	Observe that \eqref{reachForward} defines the reachable set $R_{X^0}(s) $ in terms of both admissible trajectory sets $Y$ and $S$.

	\begin{remark}
		In particular, the information contained in \eqref{reachForward} allows one to determine the first time where the reachable set is empty.
	More precisely, given $X_0 \subset X$, define $s^* \geq 0$ to be
	\begin{equation}
		s^* = \inf \{ s ~|~ R_{X^0}(s) \subset \emptyset \}.
	\label{hybridAutonomy}
	\end{equation}•
	The time \eqref{hybridAutonomy} is the autonomy of the hybrid system \eqref{HybridSystemA}-\eqref{HybridSystemB}.
	Indeed, one can readily see that if no more admissible energy states are attainable after $s^*$, any admissible trajectory must come to a stop beyond this time.
	Therefore, $s^*$ is the longest time during which the state remains inside $K$.
\end{remark}

	The following proposition ensures that the space of admissible trajectories is a compact set.

	\begin{proposition}
		Given $T' >0$, the admissible trajectory set $Y^{x,q}_{[0,T']}$ is a compact set in $C([0,T\rq{}])$ endowed with the topology $W^{1,1}$.
		\label{lemmaINF}
	\end{proposition}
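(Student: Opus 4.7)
The plan is to use the finiteness of switch orders (by the previous proposition) to reduce the problem to finitely many ODE pieces, then apply a standard Filippov-type compactness argument on each piece and glue the limits together.

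First I would take an arbitrary sequence $(y_n)_{n\geq 1}\subset Y^{x,q}_{[0,T']}$ with generating admissible hybrid controls $a_n=(u_n,w_n)$, where $w_n=\{(w_{n,i},s_{n,i})\}_{i=1}^{N_n}$. By the finite-switch proposition, $N_n\leq N:=\lfloor T'/\delta\rfloor$, so up to a first extraction $N_n\equiv N^*$ is constant. The switching times $s_{n,i}\in[0,T']$ form a bounded sequence, so up to a further extraction $s_{n,i}\to s^*_i$ for each $i\leq N^*$, and passing to the limit in $s_{n,i+1}-s_{n,i}\geq\delta$ yields $s^*_{i+1}-s^*_i\geq\delta$; in particular the limit switching times are distinct. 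Since each $w_{n,i}$ lies in the finite set $Q$, up to another extraction $w_{n,i}\equiv w^*_i$. Using (H5), the corresponding discrete trajectories $q_n(\cdot)$ agree with a piecewise-constant limit $q^*(\cdot)$ on $[0,T']$ outside arbitrarily small neighborhoods of the $s^*_i$.

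Next, by (H1) the trajectories $y_n$ are equi-Lipschitz with constant $L_f$ and pinned by $y_n(T')=x$, hence uniformly bounded. Arzelà–Ascoli delivers a subsequence with $y_n\to y^*$ uniformly on $[0,T']$, and $y^*$ is $L_f$-Lipschitz. The derivatives $\dot y_n$ are bounded in $L^\infty\hookrightarrow L^1$, so up to extraction $\dot y_n\rightharpoonup \dot y^*$ weakly in $L^1$; combined with uniform convergence this gives convergence in $C([0,T'])$ with weak $W^{1,1}$ derivative convergence, which is the sense of the claimed topology.

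The delicate point is identifying the limit as an admissible trajectory. I would work on each open interval $I_i^*:=(s^*_i,s^*_{i+1})$ separately, where eventually $q_n\equiv q^*$ on compact subsets. On such a subinterval, $\dot y_n(\tau)=f(\tau,y_n(\tau),u_n(\tau),q^*)$ belongs, for $n$ large, to the set $f(\tau,y_n(\tau),U(q^*),q^*)$, which by (H2)–(H3) is convex-valued and upper semicontinuous in $y$. Standard convex-compactness arguments (Filippov's selection theorem, or the convex-closure lemma for differential inclusions; see e.g. Bardi–Capuzzo-Dolcetta) then produce a measurable $u^*(\cdot)\in U(q^*)$ such that $\dot y^*(\tau)=f(\tau,y^*(\tau),u^*(\tau),q^*)$ a.e.\ on $I_i^*$. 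Concatenating these selections across the finitely many intervals yields a measurable $u^*$ satisfying (H0) relative to $q^*(\cdot)$. Finally, the switching sequence $w^*=\{(w^*_i,s^*_i)\}_{i=1}^{N^*}$ satisfies the lag condition by construction and the discrete-update relation (H4)–(H5) passes to the limit, so $a^*=(u^*,w^*)$ is admissible and $y^*\in Y^{x,q}_{[0,T']}$.

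The main obstacle I anticipate is the interaction between switching times and the continuous selection: one must ensure that the convexity-based limit on each $I_i^*$ is compatible across switching instants, i.e.\ that no mass of the derivatives concentrates at the $s^*_i$ in the weak limit. This is handled by the uniform Lipschitz bound (H1), which gives uniform integrability of $\dot y_n$ and rules out concentration, so the piecewise selections glue into a single admissible trajectory.
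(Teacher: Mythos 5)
Your proof is correct and follows essentially the same route as the paper's: first extract and stabilize the discrete data (number of switches via Proposition \ref{finiteJumps}, switching times and decisions), then pass to the limit on each interval where $q$ is constant using the boundedness of $f$ and the convexity assumption (H3). If anything, your functional-analytic details (Arzel\`a--Ascoli from the equi-Lipschitz bound in (H1), uniform integrability of $\dot y_n$, and a Filippov measurable selection to recover $u^*$) are more careful than the paper's, which relies on weak $L^1$ compactness of the bounded controls and a compact embedding of $W^{1,1}$ into $C^0$.
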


	\begin{proof}
		Fix $q \in Q$ and $0 \leq s < t \leq T\rq{}$.
		Consider a bounded admissible continuous control sequence $u_n \in L^1([s,t])$. Since $u_n$ is bounded, there exists a subsequence  $u_{n_j}$ such that $u_{n_j} \rightharpoonup u$ in $ L^1([s,t])$.
		Invoking $(H1)-(H2)$, we have $y^{u_n} = y_n \rightharpoonup y$ in $W^{1,1}([s,t])$. 
		Since $W^{1,1}([s,t])$ is compactly embedded in $C^0([0,t])$, we get the strong convergence of the solution $y_n \rightarrow y$ in $C^0([s,t])$. 
		Hypothesis $(H3)$ guarantees that the limit function $y$ is a solution of \eqref{HybridSystemA}.
		Because all controls $u_n$ and the limit control $u$ are admissible, $y$ is an admissible solution. 
		
		So far, the proof shows that the limit trajectory is admissible when $q$ is hold constant.
		Consider a sequence of admissible discrete control sequences $(w)_n$ where the number of switching orders, $0\leq k_n \leq \left\lfloor T / \delta \right\rfloor$ may depend on $n$. 
		Since each term of this sequence has a (first) discrete component and is bounded on the (second) continuous component, then, as $n \rightarrow \infty$ there exists a subsequence $(w)_{n_l}$ and $\Lambda>0$ such that $q_{n_j}=q$ for all $l > \Lambda$.
		This implies $k_n \rightarrow k$.
		As the number of switches is constant from the $\Lambda^\text{th}$ term and $s,t$ are arbitrary, one can obtain, using the limit discrete control sequence $w$, the time intervals $[s_{i-1},s_i]$ over which $q_i$ is constant and the argument in the first paragraph of the proof.
		Because the trajectory is continuous and admissible on all time intervals $[s_{i-1},s_i], i=1,\cdots,K$, it is admissible on $[0,T]$.

		Moreover, observe that $\forall s \in [0,t]$, $y_n(s) \in K$. Since for all $s\in [0,t]$, $y_n(s) \rightarrow y(s)$, by the compactness of $K$ we get that $\forall s\in [0,t],$ $y(s) \in K$, which completes the proof.
	\end{proof}
	
	\begin{remark}
		The arguments presented in the above proof can be slightly modified to show that the admissible trajectory set with fixed final $p$, $S^{x,q,p}_{[0,T']}$ is compact.
		Also in a similar way, the proof can be adapted to show that the reachable $R_{X^0}$ is closed.
		Indeed, by the compactness of set $X_0$, a sequence of initial conditions $(y_0)_n \in X_0$, associated with admissible trajectories $y_n \in Y^{x,q}_{[0,T']}$, converges to $y_0 \in X_0$ which is also the initial condition for the limiting trajectory $y_n \to y$.
	\end{remark}

	In order to characterize the reachable set $R_{X_0}$ this paper follows the classic level-set approach \cite{OsherSethian88}.
	The idea is to describe \eqref{reachForward} as the negative region of a function $v$. 
	It is well known that the function $v$ can be defined as the value function of some optimal control problem. 
	In the case of system \eqref{HybridSystemA}-\eqref{HybridSystemB}, $v$ happens to be the value function of a hybrid optimal control problem.
	
	Consider a Lipschitz continuous function $\tilde \phi : X \rightarrow \RRR$ such that
	\begin{equation*}
		\tilde \phi(x) \leq 0 \Leftrightarrow x \in X^0.
	\end{equation*}
	Such a function always exists -- for instance, the signed distance function $d_{X_0}$ from the set $X^0$.
	For $ L_K >0$, one can construct a bounded function $ \phi : X \rightarrow \RRR$ as 
	\begin{equation}
		\phi(x) = \max(\min(\tilde \phi(x),  L_K),- L_K).
		\label{distance}
	\end{equation}

	For a given point $s\geq 0$ and hybrid state vector $(x,q,p) \in X \times Q \times P$, define the value function to be
	\begin{equation}
		v_0(x,q,p,s) = \inf_{ S_{[0,s]}^{x,q,p}} \left\{ \phi(y_{x,q,p;s}(0)) ~|~ y_{x,q,p;s}(\theta) \in K,~ \forall \theta \in [0,s] \right\}
		\label{valueConst}
	\end{equation}
	Observe that \eqref{reachForward} works as a level-set to the negative part of \eqref{valueConst}.
	Indeed, since \eqref{valueConst} contains only admissible trajectories that remain in $K$, by \eqref{distance} implies that $v_0(x,q,p,s)$ is negative if and only if $y_{x,q,p;s}(0)$ is inside $X_0$, which in turn implies that $x \in R_{X^0}(s)$.
	
	Remark however that when defining the value function with \eqref{valueConst}, one includes state constraints, with the condition that $y_{x,q,p;s}(\theta) \in K$ for all times.
	When $K \neq \RRR^d$, one cannot expect $v$ to be continuous and the HJ equation associated with \eqref{valueConst} may have several solutions.
	In order to bypass such regularity issues, this paper follows the idea of \cite{BFZ,BFZ2}.
	Define a Lipschitz continuous function $\tilde \varphi: X \rightarrow \RRR$ to be
	\begin{equation*}
		\tilde \varphi(x) \leq 0 \Leftrightarrow x \in K,
	\end{equation*}
	and
	\begin{equation}
		\varphi(x) = \max(\min(\tilde \varphi(x),  L_K), -  L_K).
		\label{penal}
	\end{equation}
	Then, for a given $s\geq 0$ and $(x,q,p) \in X \times Q \times P$, define a total penalization function to be
	\begin{equation*}
		J(x,q,p,s ; y) = \left( \phi(y_{x,q,p;s}(0)) \bigvee \max_{\theta \in [0,s]} \varphi(y_{x,q,p;s}(\theta)) \right)
	\end{equation*}•
	and then, the optimal value :
	\begin{equation}
		v(x,q,p,s) = \inf_{y\in S_{[0,s]}^{x,q,p}} J(x,q,p,s ; y).
		\label{valuePenal}
	\end{equation}
	
	Observe that \eqref{valueConst} and \eqref{valuePenal} are bounded thanks to the constructions \eqref{distance} and \eqref{penal} respectively.
	The idea in place is that one needs only to look at the sign of $v_0$ or $v$ to obtain information about the reachable set.
	Therefore, the bound $L_K$ removes the necessity of dealing with an unbounded value function besides providing a convenient value for numerical computations.
	In order to ensure that constructions \eqref{distance} and \eqref{penal} do not interfere with the original problem\rq{}s formulation (in the sense that using $\phi,\varphi$ or $\tilde \phi, \tilde \varphi$ should yield the same results), given $s,X_0$ and $x$, assuming one does use signed distance functions to sets $X_0,K$, it suffices to take $L_K >  \sup_{x\rq{} \in X_0} x\rq{} e^{L_f  |x - x\rq{}| s}$, where $L_f$ is the Lipschitz constant of $f$.

%%%%%%%%%%%%%%%%%%%%%%%	
	\section{Main Results}

%%%%%%%%%%%%%%%%%%%%%%%	
%		\subsection{level set existence}	

	The next proposition certifies that \eqref{reachForward} is indeed a level-set of \eqref{valueConst} and \eqref{valuePenal}.	
	\begin{proposition}
	Assume (H1)-(H3). 
	Define Lipschitz continuous functions $\phi$ and $\varphi$ by \eqref{distance} and \eqref{penal} respectively. 
	Define value functions $v_0$ and $v$ by \eqref{valueConst} and \eqref{valuePenal} respectively. 
	Then, for $s \geq 0$, the reachable set is given by
		\begin{equation}
			R_{X^0}(s) = \{ x ~|~ \exists (q,p) \in Q\times P, ~v_0(x,q,p,s) \leq 0 \} = \{ x ~|~ \exists (q,p) \in Q\times P,  ~v(x,q,p,s) \leq 0 \}
		\end{equation}
	\label{levelSet}
	\end{proposition}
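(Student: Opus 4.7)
The strategy is to prove both set equalities by directly unwinding the definitions, using three ingredients: (i) the sign characterizations $\phi(x)\leq 0 \Longleftrightarrow x\in X_0$ and $\varphi(x)\leq 0 \Longleftrightarrow x\in K$, which survive the clipping in \eqref{distance}--\eqref{penal} because the clip acts symmetrically at $\pm L_K$; (ii) the Lemma identifying $Y^{x,q}_{[0,s]}$ with $\bigcup_{p\in P} S^{x,q,p}_{[0,s]}$, so that an admissible trajectory with final switch-lock value $p$ may be selected; and (iii) the compactness of $S^{x,q,p}_{[0,s]}$ in $C^0$ established in the remark following Proposition \ref{lemmaINF}, combined with the continuity of $\phi$ and $\varphi$, which guarantees that the infima in \eqref{valueConst} and \eqref{valuePenal} are attained whenever they are finite.

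For the first equality, the forward inclusion is immediate: if $x\in R_{X^0}(s)$, pick $(q,p)$ and a trajectory $y\in S^{x,q,p}_{[0,s]}$ with $y(0)\in X_0$ and $y(\theta)\in K$ throughout; then $y$ is feasible for the infimum defining $v_0$ and $\phi(y(0))\leq 0$, hence $v_0(x,q,p,s)\leq 0$. For the reverse inclusion, given $v_0(x,q,p,s)\leq 0$, I would take a minimizing sequence of admissible trajectories, each satisfying $y_n(\theta)\in K$; by compactness a subsequence converges uniformly to some $y^*\in S^{x,q,p}_{[0,s]}$, and closedness of $K$ preserves the state constraint, while continuity of $\phi$ gives $\phi(y^*(0))\leq 0$, i.e. $y^*(0)\in X_0$. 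The $Y$--$S$ lemma then yields $x\in R_{X^0}(s)$.

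For the second equality I would exploit that $J(x,q,p,s;y) = \phi(y(0))\vee \max_{\theta\in[0,s]} \varphi(y(\theta))$ is nonpositive if and only if \emph{both} $y(0)\in X_0$ \emph{and} $y(\theta)\in K$ for every $\theta$. The inclusion $\{v_0\leq 0\}\subset\{v\leq 0\}$ then follows by plugging the $v_0$-minimizer (which satisfies both conditions) into $J$. For the reverse inclusion, suppose $v(x,q,p,s)\leq 0$; applying compactness of $S^{x,q,p}_{[0,s]}$ and noting that $y\mapsto \max_\theta \varphi(y(\theta))$ is continuous on $C^0([0,s])$, I extract an optimizer $y^*$ with $J(x,q,p,s;y^*)\leq 0$. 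Splitting the max, one reads off $y^*(0)\in X_0$ and $y^*(\theta)\in K$ for every $\theta$, hence $y^*$ is feasible for $v_0$ and $v_0(x,q,p,s)\leq \phi(y^*(0))\leq 0$.

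The only nonroutine step is the attainment of the infima, and it is already packaged in the compactness statement for $S^{x,q,p}_{[0,s]}$; all other moves are direct translations between the sign of a Lipschitz auxiliary function and membership in the associated set. The payoff of the $v$-formulation, as in \cite{BFZ}, is that the state constraint $y\subset K$ is handled through the penalization $\varphi$ rather than as a hard constraint, so the second equality above is the level-set identity that the subsequent PDE analysis will exploit without needing controllability of the hybrid system near $\partial K$.
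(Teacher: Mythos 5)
Your proposal is correct and follows essentially the same route as the paper: both arguments rest on the compactness of the admissible trajectory set (to guarantee the infima in \eqref{valueConst} and \eqref{valuePenal} are attained), the sign equivalences $\phi\leq 0 \Leftrightarrow x\in X_0$ and $\varphi\leq 0\Leftrightarrow x\in K$, and the observation that a maximum of two quantities is nonpositive iff both are. The only difference is presentational — you spell out the minimizing-sequence extraction that the paper leaves implicit in its appeal to Proposition \ref{lemmaINF}.
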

	\begin{proof}
		The proof begins by showing that $v_0(x,q,p,s) \leq 0 \Rightarrow v(x,q,p,s) \leq 0$. 
		Assume $v_0(x,q,p,s) \leq 0$. 
		Then, using lemma \ref{lemmaINF}, there exists an admissible trajectory such that
		\begin{equation*}
		\phi(y_{x,q,p;s}(0)) \leq 0,~ y_{x,q,p;s}(\theta) \in K,~ \forall \theta \in [0,s]. 
		\end{equation*}
		Thus, $\max_{\theta \in [0,s]} \varphi(y_{x,q,p;s}(\theta)) \leq 0$ and
		\begin{equation*}
			v(x,q,p,s) \leq \max(\phi(y_{x,q,p;s}(0)), \max_{\theta \in [0,s]} \varphi(y_{x,q,p;s}(\theta)) ) \leq 0
		\end{equation*}
		
		Now, show that $v(x,q,p,s) \leq 0 \Rightarrow v_0(x,q,p,s) \leq 0$.
		Assume $v(x,q,p,s) \leq 0$. 
		Then, by lemma \ref{lemmaINF} there exists a trajectory that verifies
		\begin{equation*}
			\max(\phi(y_{x,q,p;s}(0)), \max_{\theta \in [0,s]} \varphi(y_{x,q,p;s}(\theta)) ) \leq 0.
		\end{equation*}
		By the definition of $\varphi$, $\forall \theta \in [0,s]$,
		$$\max (\varphi(y_{x,q,p;s}(\theta))) \leq 0 \Rightarrow y_{x,q,p;s}(\theta) \in K, $$
		which implies $v_0(x,q,p,s) \leq 0$.
		Therefore, $u$ and $v$ have the same negative regions.
		
		Now, assume $y_{x,q,p;s}(s) \in R_{X^0}(s) $. 
		Then, by definition, there exists  $(q,p) \in Q\times P$ and an admissible trajectory such that $y_{x,q,p;s}(\theta) \in K$ for all time and $y_{x,q,p;s}(0) \in X_0$.
		This implies that $\max_{ \theta \in [0,s]}( \varphi(y_{x,q,p;s}(\theta))) \leq 0$ and $\phi(y_{x,q,p;s}(0)) \leq 0$.
		It follows that $v(x,q,p,s) \leq  J(x,q,p,s;y) \leq 0$.
				
		Conversely, assume $v(x,q,p,s) \leq 0$.
		For any optimal trajectory $\hat y$ (which is admissible thanks to proposition  \ref{lemmaINF}) $v(x,q,p,s) =  J(x,q,p,s;\hat y) \leq 0$.
		Since the maximum of the two quantities is non positive only if they are both non positive one can draw the desired conclusion.			
	\end{proof}
	
	Proposition \eqref{levelSet} sets the equivalence between \eqref{reachForward} and the negative regions of \eqref{valueConst} and \eqref{valuePenal}.
	In particular, it states that it suffices to computes $v$ or $v_0$ in order to obtain information about $R_{X_0}$.
	In this sense, this paper focuses on \eqref{valuePenal}, which is associated with an optimal control problem with no state constraints.

%%%%%%%%%%%%%%%%%%%%%%%	
%		\subsection{HJB existence}

	In the sequel, it is shown that \eqref{valuePenal} is the unique (viscosity) solution of a quasi-variational inequalities\rq{} system. 
	The first step is to state a dynamic programming principle for \eqref{valuePenal}.
	
	First, we present some preliminary notation.
	Given $t >0$, set $T = (0,t]$, $\Omega = X \times Q \times P \times T$ and denote its closure by $\overline \Omega$. 
	For a fixed $p_0 \in P$, define $\Omega |_{p_0} = X \times Q \times \{p_0\} \times T$ and denote $\overline \Omega |_{p_0}$ the closure of $\Omega |_{p_0}$.
	Define
	\begin{eqnarray}
		\VVV(\overline\Omega) &:=& \{ v ~|~ v: \overline\Omega \to \RRR, ~v \text{ bounded } \}, \\
		\VVV(\overline\Omega|_{p_0}) &:=& \{ v ~|~ \text{for } p_0 \in P, ~v: \overline\Omega|_{p_0} \to \RRR, ~v \text{ bounded } \}
	\end{eqnarray}
	For $v \in \VVV(\overline \Omega)$, denote its upper and lower envelope at point $(x,q,p,s) \in \overline \Omega$ respectively as $v^*$ and $v_*$:
	\begin{eqnarray}
		v^*(x,q,p,s) &=& \limsup\limits_{\substack{x_n \to x \\ q_n \to q \\ p_n \to p \\ s_n \to s }} v(x_n,q_n,p_n,s_n)  	\label{limSupDef}\\
		v_*(x,q,p,s) &=& \liminf\limits_{\substack{x_n \to x \\ q_n \to q \\ p_n \to p \\ s_n \to s }} v(x_n,q_n,p_n,s_n) 	\label{limInfDef}
	\end{eqnarray}
	In the case where $p_0\in P$ is fixed and $v \in \VVV(\overline \Omega |_{p_0} )$, the upper and lower envelopes of $v$ are also given by \eqref{limSupDef}, \eqref{limInfDef} with $p_n = p_0$ for all $n$.
	
 	Now, fix $p=0$ and define the non-local switch operators $M,M^+,M^-: \VVV(\overline \Omega |_0) \rightarrow \VVV(\overline \Omega|_0)$ to be
	\begin{eqnarray*}
		(Mv)(x,q,0,s) &=& \inf_{\substack{w\in W(q) \\ p' \geq \delta}} v(x,g(w,q),p',s) \\
		(M^+v)(x,q,0,s) &=& \inf_{\substack{w\in W(q) \\ p' \geq \delta}}  v^*(x,g(w,q),p',s) \\
		(M^-v)(x,q,0,s) &=& \inf_{\substack{w\in W(q) \\ p' \geq \delta}}  v_*(x,g(w,q),p',s)
	\end{eqnarray*}
		
	The action of these operators on the value function represents a switch that respects the lag constraint.
	They operate whenever a switch is activated, which is equivalent to the condition $p=0$.
	Therefore, they are defined only for a fixed $p=0$.
	Let us recall here some classical properties of operators $M,M^+$ and $M^-$ (adapted from \cite{Zhang}):
	
	\begin{lemma}
	Let $v\in B\VVV(\closure{\, \Omega})$. 
	Then $M^+ v^* \in BUSC(\closure{\, \Omega}) $ and $M^- v_* \in BLSC(\closure{\, \Omega})$.	
	Moreover $(Mv)^* \leq M^+v^*$ and $(Mv)_* \geq M^-v_*$. 
	\label{Moperator}
	\end{lemma}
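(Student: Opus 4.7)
The plan is to split the claim into three pieces: boundedness of $M^+v^*$ and $M^-v_*$, their upper/lower semicontinuity, and the two inequalities relating them to $(Mv)^*$ and $(Mv)_*$. Boundedness is immediate because $v$ bounded implies $v^*$ and $v_*$ are bounded with the same bounds, and the operators are an infimum of such quantities over a nonempty index set $W(q)\times[\delta,t]$. Throughout I will use that $Q$ is discrete, so $q_n\to q$ means $q_n=q$ eventually, and hence $W(q_n)=W(q)$ and $g(w,q_n)=g(w,q)$ for $n$ large; thus the nontrivial convergence only involves $(x_n,s_n)\to(x,s)$ at the fixed slice $p=0$.

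For the upper semicontinuity of $M^+v^*$, I would use the standard fact that an infimum of a family of USC functions is USC: for every fixed $(w,p')\in W(q)\times[\delta,t]$,
\begin{equation*}
M^+v^*(x_n,q,0,s_n)\;\leq\; v^*(x_n,g(w,q),p',s_n),
\end{equation*}
so taking $\limsup$ as $(x_n,s_n)\to(x,s)$ and using USC of $v^*$ gives $\limsup M^+v^*(x_n,q,0,s_n)\leq v^*(x,g(w,q),p',s)$; then taking the infimum over $(w,p')$ on the right yields $\limsup M^+v^*(x_n,q,0,s_n)\leq M^+v^*(x,q,0,s)$, which is exactly USC.

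The lower semicontinuity of $M^-v_*$ is the main obstacle because infima of LSC functions are not LSC in general; here I would exploit compactness of the index set. Given a sequence $(x_n,s_n)\to(x,s)$, for each $n$ choose $(w_n,p'_n)\in W(q)\times[\delta,t]$ with $v_*(x_n,g(w_n,q),p'_n,s_n)\leq M^-v_*(x_n,q,0,s_n)+1/n$; since $W(q)$ is finite and $[\delta,t]$ is compact, pass to a subsequence along which $w_n\equiv w^\star$ and $p'_n\to p^\star\in[\delta,t]$. Using LSC of $v_*$ jointly in $(x,p',s)$,
\begin{equation*}
\liminf_n M^-v_*(x_n,q,0,s_n)\;\geq\;\liminf_n v_*(x_n,g(w^\star,q),p'_n,s_n)\;\geq\; v_*(x,g(w^\star,q),p^\star,s)\;\geq\; M^-v_*(x,q,0,s),
\end{equation*}
which gives the LSC property.

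Finally, the inequalities are a soft consequence of the pointwise bounds $v_*\leq v\leq v^*$ and the envelope definitions. From $v\leq v^*$ one obtains $Mv\leq M^+v^*$ pointwise, and since $M^+v^*$ has just been shown to be USC, it dominates the USC envelope of $Mv$, giving $(Mv)^*\leq M^+v^*$. Symmetrically, $v\geq v_*$ yields $Mv\geq M^-v_*$, and since $M^-v_*$ is LSC it is dominated by the LSC envelope of $Mv$, proving $(Mv)_*\geq M^-v_*$. The only technical point that requires attention is the compactness-based extraction in the LSC argument; the remaining ingredients are purely formal manipulations with envelopes.
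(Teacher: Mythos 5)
Your proof is correct, and for the upper semicontinuity of $M^+v^*$ and the two envelope inequalities it follows essentially the same route as the paper: an $\epsilon$-optimal (or pointwise) comparison with a fixed index $(w,p')$ combined with USC of $v^*$, and then $Mv\le M^+v^*$ followed by taking upper envelopes and using $(M^+v^*)^*=M^+v^*$. Where you genuinely diverge is the lower semicontinuity of $M^-v_*$: the paper dismisses it as obtainable ``in a similar fashion,'' but as you rightly point out the situation is not symmetric --- an infimum of LSC functions need not be LSC --- so some additional structure must be invoked. Your compactness argument (finiteness of $W(q)$, compactness of $[\delta,t]$, extraction of convergent near-minimizers $(w_n,p'_n)\to(w^\star,p^\star)$ with $p^\star\ge\delta$, and joint LSC of the envelope $v_*$) supplies exactly the missing ingredient, and is the standard way to make the paper's one-line claim rigorous. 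The only cosmetic point is that in your displayed chain for the LSC step you should first pass to a subsequence realizing $\liminf_n M^-v_*(x_n,q,0,s_n)$ and carry the $1/n$ error explicitly before extracting $(w^\star,p^\star)$; this is routine bookkeeping and does not affect the argument. In short: same skeleton as the paper, but your treatment of the LSC half is more complete than the source.
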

	
	\begin{proof}
		Fix $q \in Q$, $ p=0$ and $\epsilon >0$. 
		Let $w^* \in W(x,q)$ and $p^*>0$ be such that for all $x \in X$ and $s \in T$, $(M^+v^*)(x,q,p,s) \geq v^*(x,g(w^*,q),p^*,s) - \epsilon$.
		Consider sequences $x_n \to x$ and $s_n \to s$.
		Then
		\begin{eqnarray*}
			M^+v^*(x,q,0,s) &\geq& v^*(x,g(w^*,q),p^*,s) - \epsilon\\
				&\geq& \limsup\limits_{\substack{x_n \to x \\ s_n \to s }} v^*(x_n,g(w^*,q),p^*,s_n)  - \epsilon \\
				&\geq& \limsup\limits_{\substack{x_n \to x \\ s_n \to s }} \inf_{w \in W(x_n,q) , p\rq{} \geq \delta} v^*(x_n,g(w,q),p\rq{},s_n)  - \epsilon \\
				&=& \limsup\limits_{\substack{x_n \to x \\ s_n \to s }} (M^+v^*)(x_n,q,0,s_n)  - \epsilon.
		\end{eqnarray*}•
		Notice that $q$ and $p$ are held constant throughout the inequalities and thus, the limsup of the jump operator considering only sequences $x_n\to x$ and $s_n\to s$ corresponds to its envelope at the limit point.
		Then, by the arbitrariness of $\epsilon$, this proves the upper semi-continuity of $M^+ v^*$.
		The lower semi-continuity of $M^- v_*$ can be obtained in a similar fashion.
		
		Now, observe that $Mv \leq M^+v^*$.
		Taking the upper envelope of each side, one obtains:
		\begin{equation*}	
			(Mv)^* \leq (M^+v^*)^* = M^+v^*.
		\end{equation*}•
		By the same kind of reasoning  $Mv \geq M^-v_*$ and
		\begin{equation*}	
			(Mv)_* \geq (M^-v_*)_* = M^-v_*.
		\end{equation*}•	
	\end{proof}

	The next proposition is the dynamic programming principle verified by \eqref{valuePenal}: 
	
	\begin{proposition}
	The value function \eqref{valuePenal} satisfies the following dynamic programming principle:
	\begin{enumerate}
		\item[(i)] For s=0, 
		\begin{equation}
			v(x,q,p,0) = \max(\phi(x) , \varphi(x)),~~ \forall (x,q,p) \in X \times Q \times P,
		\label{initDPP}
		\end{equation}	
		\item[(ii)] For $p=0$, 
		\begin{equation}
				v(x,q,0,s) =(Mv)(x,q,0,s), ~~ (x,q,s) \in X \times Q \times T,
		\label{switchPDD}
		\end{equation}
		\item[(iii)] For $(x,q,p,s) \in \Omega$, define the non-intervention zone as $\Sigma = (0,p \wedge s)$.
		Then, for $h \in \Sigma$,
		\begin{equation}
			v(x,q,p,s) = \inf_{S_{[s-h,s]}^{x,q,p}} \left\{ v(y_{x,q,p;h}(s-h),q,p-h,s-h) \bigvee \max_{\theta \in [s-h,s]} \varphi(y_{x,q,p;h}(\theta))  \right\}
		\label{eqPDD}
		\end{equation}

	\end{enumerate}	
	\label{LemmaPDD}
	\end{proposition}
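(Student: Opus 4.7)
Part (i) is immediate: at $s=0$ the set $S^{x,q,p}_{[0,0]}$ contains only the constant trajectory $y(\cdot)\equiv x$, no switches are possible, and the cost reduces to $J(x,q,p,0;y)=\phi(x)\vee\varphi(x)$. Parts (ii) and (iii) are then established by the usual bracketing argument: in one direction we construct an admissible trajectory realizing the right-hand side, and in the other we extract from an arbitrary admissible trajectory of the left-hand side a cost bound matching the right-hand side.

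For (ii), the structural observation is that $\pi(s)=p=0$ forces a switch to occur exactly at $\tau=s$, because $\pi$ grows linearly at rate $1$ between switches and resets to $0$ only at switch times. Letting $w\in W(q)$ denote this terminal switch decision and $p'=\pi(s^-)\geq\delta$, the pre-switch mode equals $g(w,q)$; since $g$ only relabels the discrete coordinate and $y$ has no jumps, removing the terminal switch produces an admissible trajectory with identical continuous state in $S^{x,g(w,q),p'}_{[0,s]}$ carrying the same cost $J$. Conversely, for any $w\in W(q)$, $p'\geq\delta$, and any trajectory in $S^{x,g(w,q),p'}_{[0,s]}$, appending the switch $(w,s)$ produces an element of $S^{x,q,0}_{[0,s]}$ with the same cost. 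Infimizing on both sides gives $v(x,q,0,s)=\inf_{w\in W(q),\,p'\geq\delta} v(x,g(w,q),p',s)=(Mv)(x,q,0,s)$.

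For (iii), fix $h\in\Sigma=(0,p\wedge s)$. Because $h<p$ and $\pi$ grows at rate $1$ between resets, on $[s-h,s]$ one must have $\pi(\tau)=\tau-(s-p)\in(0,p]$, which precludes any switch in this interval; hence the discrete mode is constant and equal to $q$ on $[s-h,s]$. For the inequality $\leq$: given any $y\in S^{x,q,p}_{[s-h,s]}$ and $\varepsilon>0$, pick an $\varepsilon$-optimal $\hat y\in S^{y(s-h),q,p-h}_{[0,s-h]}$; the concatenation $\tilde y$ of $\hat y$ and $y$ at $\tau=s-h$ lies in $S^{x,q,p}_{[0,s]}$ (the $\pi$-profiles glue continuously, the switch lag is unaffected, and $y$ is continuous at $s-h$), and
\[
	J(x,q,p,s;\tilde y) \;\leq\; \bigl(v(y(s-h),q,p-h,s-h)+\varepsilon\bigr) \,\vee\, \max_{\theta\in[s-h,s]}\varphi(y(\theta));
\]
taking the infimum over $y\in S^{x,q,p}_{[s-h,s]}$ and letting $\varepsilon\to 0$ yields \eqref{eqPDD} as an upper bound. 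For $\geq$: any $y\in S^{x,q,p}_{[0,s]}$ restricts to an admissible trajectory in $S^{y(s-h),q,p-h}_{[0,s-h]}$ (no switch is severed, as none occurs in $(s-h,s]$), and the running-max cost splits cleanly as
\[
	J(x,q,p,s;y) \;=\; \bigl(\phi(y(0))\vee\max_{[0,s-h]}\varphi(y)\bigr)\vee\max_{[s-h,s]}\varphi(y) \;\geq\; v(y(s-h),q,p-h,s-h)\vee\max_{[s-h,s]}\varphi(y);
\]
infimum over $y$ supplies the matching lower bound.

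The main obstacle is the bookkeeping needed to show that concatenation and restriction preserve the three admissibility conditions \eqref{discrete1}--\eqref{discrete3} together with the prescribed boundary values of $\pi$. The strict inequality $h<p$ in (iii) and the identification $\pi(s^-)\geq\delta$ in (ii) are precisely what make the glued (or stripped) switch sequence compatible with the lag constraint without invoking any controllability or transversality hypothesis, and the finiteness of admissible switch sequences from Proposition \ref{finiteJumps} excludes any Zeno-type pathology in these constructions.
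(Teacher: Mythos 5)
Your proposal is correct and follows essentially the same route as the paper: (i) by direct inspection of the cost at $s=0$, (ii) by stripping/appending the terminal switch forced by $\pi(s)=0$ and identifying the resulting cost with that of a trajectory in $S^{x,g(w,q),p'}_{[0,s]}$, and (iii) by noting that $h<p\wedge s$ excludes switches on $[s-h,s]$ and then splitting the control at $s-h$ with $\varepsilon$-optimal selections for the reverse inequality. Your added bookkeeping on the $\pi$-profile ($\pi(\tau)=\tau-(s-p)$ on the no-switch interval) makes explicit what the paper leaves implicit, but the argument is the same.
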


	\begin{proof}
	The dynamic programming principle is composed of three parts. 
	
	%%%%%%%%%%
	(i):	
	Equality \eqref{initDPP} is obtained directly by definition \eqref{valuePenal}.
	\\
	
	%%%%%%%%%%	
	(ii):	
	\rq\rq{} $\leq$ \rq\rq{}.
	Let $ (x,q,s) \in X \times Q \times T$ and $p=0$.
	Consider a hybrid control $a = (u(\cdot),\{ w_i,s_i \}_{i=1}^N)$ and an associated trajectory $y^a$.
	Construct a control $\overline a = (\overline u(\cdot),\{ \overline w_i,\overline s_i \}_{i=1}^{N-1})$ with associated trajectory $y^{\overline a}$, where $\overline u =u$, $\overline w_i = w_{i}$ and  $\overline s_i = s_{i}$ for $i=1,\cdots,N-1$ and $s_N = s$, $w_N = w\rq{}$.
	Then, one obtains,
	\begin{eqnarray*}
		v(x,q,0,s) 	&\leq& J(x,q,0,s;y^a) \\
			 	&=& J(x,g(w\rq{},q),p\rq{} ,s; y^{\overline a}),
	\end{eqnarray*}	
	where the controller must respect the condition $p\rq{}\geq \delta$ for it to be admissible.
	Since $\overline a$ is arbitrary, one can choose it such that
	\begin{eqnarray*}
		v(x,q,0,s) 	&\leq& \inf_{y^{\overline a} \in S_{[0,s]}^{(x,g(w\rq{},q),p\rq{})}} J(x,g(w\rq{},q),p\rq{} ,s; y^{\overline a}) +\epsilon_1 \\
				&=&	v(x,g(w\rq{},q),p\rq{},s) + \epsilon_1,
	\end{eqnarray*}
	where $\epsilon_1 >0$.
	Now, choose the last switch $w\rq{}$ and $p\rq{}$ such that
	\begin{eqnarray*}
		v(x,q,0,s) 	&\leq&   \inf_{\substack{w\rq{}\in W(q) \\ p' \geq \delta}} v(x,g(w\rq{},q),p',s) +\epsilon_1 \\
				&=& (Mv)(x,q,p,s) + \epsilon_1.
	\end{eqnarray*}
	
	\rq\rq{} $\geq$ \rq\rq{}.
	For $ (x,q,p) \in X \times Q \times P$ and $p=0$, there always exists an admissible control that $a_\epsilon$, such that there exists $\epsilon_2 >0$ and,
	\begin{equation*}
	v(x,q,0,s) +  \epsilon_2 \geq  J(x,q,0,s;y^{a_\epsilon})
	\end{equation*}
	Using the same hybrid control constructions as in the \lq\lq{}$\leq$\rq\rq{} case, one obtains 
	\begin{eqnarray*}
		 J(x,q,0,s;y^{a_\epsilon}) &=& J(x,g(w\rq{},q),p\rq{},s ; y^{\bar a_\epsilon})\\
				&\geq& v(x,g(w\rq{},q),p\rq{},s) \\
				&\geq& \inf_{\substack{w\rq{}\in W(q) \\ p' \geq \delta}} v(x,g(w\rq{},q),p\rq{},s) \\
				&=& (Mv)(x,q,0,s).
	\end{eqnarray*}	
	Relation \eqref{switchPDD} is obtained by the arbitrariness of both $\epsilon_1, \epsilon_2$.
	\\
	
	%%%%%%%%%%
	(iii):
	\rq\rq{} $\leq$ \rq\rq{}.
	For $(x,q,p,s) \in \Omega$ and $0<h\leq p \wedge s$, \eqref{valuePenal} yields
	\begin{equation}
		v(x,q,p,s) \leq \max  \left(  \left(  \phi(y_{x,q,p;s}(0)) \bigvee \max_{\theta \in [0,s-h]} \varphi(y_{x,q,p;s}(\theta)) \right)  , \max_{\theta \in [s-h,s]} \varphi(y_{x,q,p;s}(\theta))  \right),
		\label{ineqValueDecomp}
	\end{equation}
	for any $y \in  S^{x,q,p}_{[0,s]}$.
	By the choice of $h$, there is no switching between times $s-h$ and $s$.
	Write the admissible control $a=(u,w)$ as $a_0 = (u_0,w)$ and $a_1 = (u_1,w)$ with
	\begin{eqnarray*}
		u_0(s) = u(s), ~~ s\in[0,s-h], \\
		u_1(s) = u(s),~~ s\in(s-h,s].
	\end{eqnarray*}
	Since $a$ is admissible, both controls $a_0,a_1$ are also admissible.
	Denote the trajectory associated with controls $a,a_0,a_1$ respectively by $y^a, y^0, y^1$.
	Then, $y^a \in S^{x,q,p}_{[0,s]}$ and by continuity of the trajectory we achieve the following decomposition:
	\begin{eqnarray*}
		y^1 \in S^{x,q,p}_{[s-h,s]}, ~~~~ y^0 \in S^{y^1(s-h),q,p-h}_{[0,s-h]}.
	\end{eqnarray*}
	The above decomposition together with inequality \eqref{ineqValueDecomp} yields
	\begin{equation*}	
		v(x,q,p,s) \leq \max  \left(  \left(  \phi(y^0(0)) \bigvee \max_{\theta \in [0,s-h]} \varphi(y^0(\theta)) \right)  , \max_{\theta \in [s-h,s]} \varphi(y^1(\theta))  \right),
	\end{equation*}

	And one concludes after minimizing with respect to the trajectories associated with $a_0$ and $a_1$.
	
	The \rq\rq{} $\geq$ \rq\rq{} part uses a particular $\epsilon$-optimal controller and the same decomposition, allowing to conclude by the arbitrariness of $\epsilon$.	
	This is possible because there is no switching between $s-h$ and $s$.	
	
	\end{proof}
			
	A direct consequence of proposition \ref{LemmaPDD} is the Lipschitz continuity of the value function, stated in the next proposition:
	\begin{proposition}
		Assume (H1)-(H2). 
		Define Lipschitz continuous functions $\phi$ and $\varphi$ by \eqref{distance} and \eqref{penal}, with Lipschitz constants $L_\phi$ and $L_\varphi$ respectively. 
		Then, for $p>0$, \eqref{valuePenal} is Lipschitz continuous.
	\end{proposition}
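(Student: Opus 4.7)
The plan is to establish joint Lipschitz continuity in $(x,p,s)$ for $p>0$ by coupling admissible trajectories and passing bounds through the cost $J$; since $q$ lives in a discrete set, continuity in $q$ is automatic, so I focus on the remaining variables and exploit the dynamic programming principle of Proposition~\ref{LemmaPDD} together with Gronwall-type estimates.

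First I would prove Lipschitz continuity in $x$. Fix $(q,p,s)$ with $p>0$ and take $x_1,x_2\in X$. Pick a hybrid control $a_1$ that is $\epsilon$-optimal for $v(x_1,q,p,s)$ with trajectory $y_1 \in S^{x_1,q,p}_{[0,s]}$ and re-use the same $a_1$ with terminal datum $y_2(s)=x_2$. The switching sequence and hence the switch-lock $\pi$ are unchanged, so admissibility and $\pi(s)=p$ are preserved, giving $y_2\in S^{x_2,q,p}_{[0,s]}$. Applying (H1) and Gronwall to the backward Cauchy problem yields $\|y_1(\theta)-y_2(\theta)\|\leq e^{L_f t}\|x_1-x_2\|$ uniformly on $[0,s]$, and Lipschitz continuity of $\phi,\varphi$ transfers this bound to $J$. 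Taking $\inf$ and swapping roles provides the constant $L_x := (L_\phi\vee L_\varphi)\,e^{L_f t}$.

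Next I would exploit the DPP for the $(p,s)$-direction. For $h\in(0,p\wedge s)$,
$$v(x,q,p,s) \;=\; \inf_{y\in S^{x,q,p}_{[s-h,s]}}\max\!\left(v(y(s-h),q,p-h,s-h),\;\max_{\theta\in[s-h,s]}\varphi(y(\theta))\right),$$
and on the interval $[s-h,s]$ no switching occurs so $\|y(\theta)-x\|\leq L_f h$. Combining this with Step~1 and the Lipschitz constant of $\varphi$ gives $|v(x,q,p,s)-v(x,q,p-h,s-h)|\leq (L_x L_f + L_\varphi L_f)\,h$, i.e., Lipschitz continuity along the diagonal $(p,s)\mapsto(p-h,s-h)$. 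To move $p$ without moving $s$, I would couple two admissible trajectories by shifting only the last switching time: an admissible trajectory with $\pi(s)=p_1$ has its last switch at $s-p_1$, and translating this switch (alone) to $s-p_2$ produces, after the shift, an admissible trajectory with $\pi(s)=p_2$ provided the lag condition $\pi(s_i^-)\geq\delta$ with the preceding switch is preserved; on the symmetric-difference window $[s-p_2,s-p_1]$ the two continuous trajectories differ by at most $L_f|p_2-p_1|$ in norm, and Gronwall propagates this to a $C^0$-bound of order $|p_2-p_1|$ on $[0,s]$. Pushing through $J$ and combining with the diagonal estimate gives joint Lipschitz continuity in $(p,s)$, and hence, together with Step~1, in $(x,p,s)$.

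The main obstacle I expect is the coupling step for varying $p$ at fixed $s$: the admissibility constraint $\pi(s)=p$ is sensitive to the position of the switches, so naively time-shifting a switch can violate the lag inequality against its neighbours, and the ``no-switch'' case $p=\delta+s$ is an isolated admissible value that does not connect continuously to the switching branch. The hypothesis $p>0$ together with the fixed positive lag $\delta$ provides the quantitative cushion to perform these shifts within the switching branch; at $p=0$ this cushion collapses and the non-local operator $M$ appearing in the DPP takes over, which is precisely why the statement excludes that boundary.
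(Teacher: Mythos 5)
Your Steps 1 and 2 reproduce the paper's proof almost exactly. The $x$-estimate in the paper is obtained by the same coupling you describe: run one admissible control from both terminal data (the switching sequence, hence the lock trajectory $\pi$ and the value $p$, are unchanged, so admissibility is preserved), apply Gronwall via (H1), and pass the bound through $\phi,\varphi$ using $\max(A,B)-\max(C,D)\le\max(A-C,B-D)$, which yields the constant $L_v=\max(L_\phi,L_\varphi)e^{L_f s}$. The second estimate in the paper is precisely your diagonal bound: it compares $v(x,q,p+h,s+h)$ with $v(x,q,p,s)$ through Proposition~\ref{LemmaPDD}(iii), uses $\|f\|\le L_f$ so that the trajectory moves by at most $L_f h$ on the no-switch window, and invokes $v\ge\varphi$ to control the obstacle branch, giving $L_f\max(L_v,L_\varphi)h$. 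Where you diverge is Step 3: the paper stops at the diagonal increment $(p,s)\mapsto(p+h,s+h)$ and never attempts to vary $p$ at fixed $s$ (or $s$ at fixed $p$), so its written proof only establishes Lipschitz continuity in $x$ and along that diagonal direction. Your switch-shifting construction is an attempt to supply the missing separate-variable estimate, and you correctly identify why it is delicate --- translating the last switch can break the lag inequality against the penultimate one, and the no-switch value $p=\delta+s$ is an isolated point of the admissible $p$-range disconnected from the switching branch --- but you do not actually overcome these obstacles, so that step remains a sketch. Since the paper's own argument contains no such step either, your proposal matches the paper on everything the paper actually proves, by the same techniques, and is more explicit about what a genuinely joint Lipschitz statement in $(x,p,s)$ would still require.
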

	\begin{proof}
		Fix $s,s\rq{}>0$, $x,x\rq{} \in X$, $q \in Q$, $p > 0$. 
		Then, using $\max(A,B) - \max(C,D) \leq \max(A-B,C-D) $, one obtains,
		\begin{eqnarray*}
		| v(x,q,p,s) - v(x\rq{},q,p,s) | 	&\leq&  \max \left( \left| \phi(y_{x,q,p;s}(0)) - \phi(y_{x',q,p;s}(0)) \right|, \phantom{ \max_{\theta}}  \right. \\
						&& \hspace{3cm} \left. \max_{\theta \in [0,s]}  \left|\varphi(y_{x,q,p;s}(\theta)) - \varphi(y_{x',q,p;s}(\theta) )\right|   \right)  \\ 
						&\leq&  \max \left( L_\phi \left| y_{x,q,p;s}(0) - y_{x',q,p;s}(0) \right| ,\phantom{ \max_{\theta}}  \right. \\
						&&  \hspace{3.2cm} \left. L_\varphi \max_{\theta \in [0,s]} \left( \left| y_{x,q,p;s}(\theta) - y_{x',q,p;s}(\theta)  \right| \right) \right)  \\
						& \leq& L_v|x-x'|,	
		\end{eqnarray*}
		where $L_v = \max(L_\phi,L_\varphi) e^{L_f s }$.
		
		Now, take $h>0$ and observe that $v(x,q,p,s) \geq \varphi(x)$.
		Then,
		\begin{eqnarray*}
		| v(x,q,p+h,s+h) - v(x,q,p,s) | 	&\leq& \max \left( \left| v(y_{x,q,p+h;s+h}(s),q,p,s) - v(x,q,p,s)  \phantom{ \max_{\theta}} \hspace{-0.6cm} \right| ,\right. \\
						&& \hspace{2.1cm} \left. \left| \max_{\theta \in [s,s+h]} \varphi(y_{x,q,p;s+h}(\theta))-\varphi(x) \right|  \right) \\
						&\leq&  \max \left( L_v \left| y_{x,q,,p+h;s+h}(s)-y_{x,q,p;s}(s) \phantom{ \max_{\theta}} \hspace{-0.6cm} \right| \right. , \\
						&&  \hspace{1.1cm} L_\varphi \left.  \max_{\theta \in [s,s+h]} \left|y_{s,q,p+h;s+h}(\theta)-y_{x,q,p;s}(s) \right| \right)  	\\
						&\leq& L_f \max(L_v,L_\varphi)h.
		\end{eqnarray*}
	\end{proof}

	In order to proceed to the HJB equations, define the Hamiltionian to be
	\begin{equation}
		H(s,x,q,z) =  \sup_{u\in U(q)}  f(s,x,u,q) \cdot z
	\label{Hamilt}
	\end{equation}

	Before stating the next result, we recall the notion of viscosity solution \cite{Crandall92user’sguide} used throughout this paper.	
	
	\begin{definition}
	A function $u_1$ (resp. $u_2$) upper semi-continuous (u.s.c.) (resp. lower semi-continuous (l.s.c) is a viscosity subsolution (resp. supersolution) if there exists a continuously differentiable function $\psi$ such that $u_1 - \psi$ has a local maximum (resp. $u_2 - \psi$ has a local minimum) at $(x,q,p,s) \in \overline \Omega$ and
			\begin{eqnarray}
				\partial_s \psi + \partial_p \psi+ H(s,x,q,\nabla_x \psi)  \bigwedge u_1-\varphi(x)  \leq 0 &\text{ if }& (x,q,p,s) \in \Omega \label{sub} \\
				u_1(x,q,p,s) \leq (M^+u_1)(x,q,p,s) &\text{ if }& p=0, \\
				u_1(x,q,p,s) \leq \max(\phi(x),\varphi(x)) &\text{ if }& s=0,
			\end{eqnarray}
		(with the inequalities signs inversed and $M^-$ instead of $M^+$ for $u_2$).
		A bounded function $u$ is a (viscosity) solution of \eqref{HJ1}-\eqref{HJ3} if $u^*$ is a subsolution and $u_*$ is a supersolution.
	\end{definition}

	The next statement shows that the value function defined in \eqref{valuePenal} is a solution of a quasi-variational system.
	
	\begin{theorem}
	Assume (H1)-(H5). 
%	Fix $t>0$, $d>0$ and  let $X = \RRR^d$, $Q = \{0,1\}$, $P=(0,t]$, $T=(0,t]$ and $\Omega = X \times Q \times P \times T$.
	Let the Lipschitz functions $\phi$ and $\varphi$ be defined by \eqref{distance} and \eqref{penal} respectively. 
	Then, the Lipschitz, bounded value function $v$ defined in \eqref{valuePenal} is a viscosity solution of the quasi-variational inequality
	
	\begin{eqnarray}
		\partial_s v + \partial_p v + H(s,x,q,\nabla_x v) \bigwedge v-\varphi(x)  = 0 	&,& \forall (x,q,p,s) \in \Omega, \label{HJ1}\\
		v(x,q,0,s) = (Mv)(x,q,0,s)							&,& \forall (x,q,s) \in X \times Q \times [0,\infty[ \label{HJ2} \\
		v(x,q,p,0)  = \max(\phi(x),\varphi(x))						&,& \forall (x,q,p) \in X \times Q \times P.
		\label{HJ3} 
	\end{eqnarray}
	\label{HJprop}
	\end{theorem}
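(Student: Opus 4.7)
The plan is to verify the viscosity subsolution and supersolution properties separately, for each of the three equations \eqref{HJ1}--\eqref{HJ3}, by leveraging the corresponding three parts of the dynamic programming principle established in Proposition \ref{LemmaPDD}. The Lipschitz regularity of $v$ (already established) is used to justify passage to the limit in the various inequalities; in particular $v = v^* = v_*$.

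For the initial condition \eqref{HJ3}, the identity is already part (i) of the DPP, so nothing remains to show. For the switch boundary \eqref{HJ2} at $p=0$, part (ii) of the DPP gives $v = Mv$. Taking upper and lower envelopes and applying Lemma \ref{Moperator} yields
\begin{equation*}
v^* = (Mv)^* \leq M^+ v^*, \qquad v_* = (Mv)_* \geq M^- v_*,
\end{equation*}
which are exactly the required subsolution and supersolution inequalities at $p=0$.

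The main step is the interior HJB inequality \eqref{HJ1}, derived from part (iii) of the DPP. For the subsolution property, let $\psi$ be $C^1$ and let $(\bar x, \bar q, \bar p, \bar s) \in \Omega$ be a local maximum of $v^* - \psi$. If $v^*(\bar x,\bar q,\bar p,\bar s) \leq \varphi(\bar x)$ the inequality is immediate, so assume strict inequality. By continuity of $\varphi$ and of trajectories, for $h>0$ small enough and for $(x,q,p,s)$ in a neighborhood, the term $\max_{\theta \in [s-h,s]} \varphi(y(\theta))$ in \eqref{eqPDD} remains strictly less than $v(x,q,p,s)$, so the maximum in the DPP reduces to the trajectory term. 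Freezing any constant control $u \in U(\bar q)$ and using the local-maximum property of $v^* - \psi$ together with \eqref{eqPDD}, one obtains after dividing by $h$ and letting $h \to 0^+$,
\begin{equation*}
\partial_s \psi(\bar x,\bar q,\bar p,\bar s) + \partial_p \psi(\bar x,\bar q,\bar p,\bar s) + f(\bar s,\bar x,u,\bar q)\cdot \nabla_x \psi(\bar x,\bar q,\bar p,\bar s) \leq 0,
\end{equation*}
and taking the supremum over $u \in U(\bar q)$ gives the desired inequality. The supersolution property follows a symmetric argument: the lower bound $v \geq \varphi$ holds by construction of the cost, so the only nontrivial case is to derive the reversed HJB inequality at a local minimum of $v_* - \psi$, using an $\varepsilon$-optimal control in \eqref{eqPDD} and passing to the limit as $h,\varepsilon \to 0$ via the convexity assumption (H3) on $f(s,y,U,q)$.

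I expect the main technical obstacle to be the treatment of the switch boundary $p = 0$ where $v$ is only semicontinuous a priori: one must be careful that the envelopes of $v$ interact correctly with the non-local operator $M$, and Lemma \ref{Moperator} is precisely what bridges this gap. In the interior, the classical DPP-to-HJB machinery goes through thanks to the fact that on the non-intervention zone $\Sigma$ there is no switching, so trajectories behave as in a purely continuous control problem, and both the decision lag variable $p$ and the time variable $s$ evolve with unit speed, yielding the transport terms $\partial_s \psi + \partial_p \psi$ in a uniform manner.
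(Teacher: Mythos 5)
Your proposal is correct and follows essentially the same route as the paper: the initial and switch conditions are read off from parts (i) and (ii) of the dynamic programming principle (with Lemma \ref{Moperator} handling the envelopes at $p=0$), and the interior inequality \eqref{HJ1} is obtained from part (iii) by splitting the minimum into the obstacle case and the PDE case, using a frozen constant control for the subsolution half and an $\epsilon$-optimal control for the supersolution half. The only cosmetic differences are that you make the envelope argument at $p=0$ more explicit than the paper does, and you invoke (H3) where the paper simply passes to the limit.
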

	\begin{proof}
		By definition, $v$ satisfies the initial condition \eqref{HJ3}. 
		The boundary condition \eqref{HJ2} is deducted from proposition \ref{LemmaPDD}.
		Now, we proceed to show that $(i)$ $v$ is a supersolution and $(ii)$ a subsolution of \eqref{HJ1}:

		First, let us prove the supersolution property $(i)$. 
		To satisfy $\min(A,B) \geq 0$ one needs to show $A\geq 0$ and $B\geq 0$. 
		Since $v-\varphi(x)\geq 0$, it is immediate that $B\geq 0$. 
		Now, consider $0 < h \leq p\wedge s $.
		Let $\psi$ be a continuously differentiable function such that $v-\psi$ attains a minimum at $(x,q,p,s)$.
		Then,using proposition \eqref{LemmaPDD}$(iii)$ and selecting an $\epsilon$-optimal controller, dependent on $h$, with associated trajectory $y^\epsilon_{x,q,p;s}$, it follows that
		\begin{eqnarray*}
			 \psi(x,q,p,s) = v(x,q,p,s) &\geq& \inf_{S_{[0,s]}^{x,q,p}} v(y_{x,q,p;s}(s-h),q,p-h,s-h) \\
		 			 &\geq& v(y^\epsilon_{x,q,p;s}(s-h),q,p-h,s-h) - h\epsilon \\
		 			 &=& \psi(y^\epsilon_{x,q,p;s}(s-h),q,p-h,s-h) - h\epsilon 
		\end{eqnarray*}
		and then,
		$$ \psi(x,q,p,s) -\psi(y^\epsilon_{x,q,p;s}(s-h),q,p-h,s-h) \geq - h \epsilon. $$		
		Since the control domain is bounded and using the continuity of $f,p$ and $\psi$ we divide by $h$ and take the limit $h \rightarrow 0$ to obtain
		$$ \partial_s \psi + \partial_p \psi + H(s,x,q,\nabla_x \psi) \geq -\epsilon $$
		and conclude that $A\geq 0$ by the arbitrariness of $\epsilon$.
		
		For $(ii)$, observe that for $\min(A,B) \leq 0$ it suffices to show that $A\leq 0$ or $B \leq 0$. 
		If $v(x,q,p,s)=\varphi(x)$, it implies $B \leq 0$. 
		On the contrary, if $v(x,q,p,s) > \varphi(x)$, then there exists a $\Sigma \ni h \geq 0 $ small enough so that
		$$ v(y^u_{x,q,p;s}(s-h),q,p-h,s-h) > \max_{\theta \in [s-h,h]} \varphi(y^u_{x,q,p;s}(\theta)) $$
		strictly, using the Lipschitz continuity of $f,p$ and the compactness of $U$ (which ensures the trajectories will remain near each other). 
		Thus, proposition \ref{LemmaPDD}$(iii)$ yields
		$$v(x,q,p,s) = \inf_{y^u \in S^{x,q,p}_{[s-h,s]}}v(y^u_{x,q,p;s}(s-h),q,p-h,s-h).$$
		Fix an arbitrary $u\in U$ and consider a constant control $u(s)=u$ for $ 0< s <h$. 
		Let $\psi$ be a continuously differentiable function such that $v-\psi$ attains a maximum at $(x,q,p,s)$.
		Also, without loss of generality, assume that $v(x,q,p,s)=\psi(x,q,p,s)$.
		Hence,
		\begin{eqnarray}
		\nonumber v(x,q,p,s) &\leq& v(y^u_{x,q,p;s}(s-h),q,p-h,s-h) \\
		\nonumber 				 &\leq& \psi(y^u_{x,q,p;s}(s-h),q,p-h,s-h) 	
		\end{eqnarray}
		and by dividing by $h$ and taking $h \rightarrow 0$ one obtains
		$$\partial_s \psi + \partial_p \psi + f(s,x,u) \cdot \nabla_x \psi \leq 0.$$
		Since $u$ is arbitrary and admissible, we conclude that $A\leq 0$, which completes the proof.
	\end{proof}

%%%%%%%%%%%%%%%%%%%%%%%	
%		\subsection{HJB unicity}

	Theorem \ref{HJprop} provides a convenient way to characterize the value function whose level-set is the reachable set defined in \eqref{reachForward}.
	However, in order to be sure that the solution that stems from \eqref{HJ1}-\eqref{HJ3} corresponds to \eqref{valuePenal}, a uniqueness result is necessary.
	This is achieved by a comparison principle which is stated in the next theorem.
	Let $BUSC(\Omega)$ and $BLSC(\Omega)$ respectively be the space of u.s.c. and l.s.c. functions defined over the set $\Omega$.
	
	\begin{theorem}
%		Fix $t>0$, $d>0$ and let $X = \RRR^d$, $Q = \{0,1\}$, $P=(0,t]$, $T=(0,t]$ and $\Omega = X \times Q \times P \times T$.
		Let $u_1 \in BUSC(\Omega)$ and $u_2 \in BLSC(\Omega)$ be, respectively, sub- and supersolution of 
		\begin{eqnarray}
			\partial_s u + \partial_p u + H(s,x,q,\nabla_x u)  \bigwedge u-\varphi(x) ) = 0	 &,& \forall (x,q,p,s) \in \Omega, \label{HJBinsideNUM}\\
			u(x,q,0,s) -(Mu)(x,q,0,s)= 0 							&,& \forall (x,q,s) \in X \times Q \times [0,\infty[	 \\			
			u(x,q,p,0) = \max(\phi(x),\varphi(x))						&,& \forall (x,q,p) \in X \times Q \times P.
		\label{HJBmainNUM}
		\end{eqnarray}
		
		Then, $u_1 \leq u_2$ in $\overline \Omega$.
		\label{compPrinc}
	\end{theorem}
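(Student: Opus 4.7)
The plan is to prove the comparison principle by the classical doubling-of-variables technique, carefully adapted to handle three nonstandard features of the system: the non-local switching boundary condition at $p=0$, the obstacle term $u-\varphi(x)$, and the unboundedness of $X=\RRR^d$. Throughout, the discrete variable $q$ is held fixed in the doubling (since $Q$ is discrete and finite, a maximum argument lets us reduce to a single $q$).

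I would argue by contradiction: suppose $\sup_{\overline\Omega}(u_1-u_2)=:M>0$. To localize, I would subtract a ``friendly giant'' of the form $\eta(1+|x|^2)^{1/2}+\lambda/\bar s+\lambda/\bar s'$ (and similar for $p,p'$) and then double the variables by studying, for small $\epsilon,\eta,\lambda>0$,
\begin{equation*}
\Psi_\epsilon(x,y,p,p',s,s')=u_1(x,q,p,s)-u_2(y,q,p',s')-\frac{|x-y|^2+|p-p'|^2+|s-s'|^2}{\epsilon}-\eta\langle x\rangle-\lambda(1/s+1/s').
\end{equation*}
The penalty guarantees that a maximizer $(\bar x,\bar y,\bar p,\bar p',\bar s,\bar s')$ exists in $\overline\Omega$, that $|\bar x-\bar y|^2/\epsilon\to 0$, $|\bar s-\bar s'|^2/\epsilon\to 0$, $|\bar p-\bar p'|^2/\epsilon\to 0$, and that $\Psi_\epsilon$ at the maximizer is at least $M-o(1)$ as the parameters are sent to zero in the correct order.

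The core of the proof is a case analysis on the position of the maximizer. If $\bar s=0$ (or $\bar s'=0$), the initial condition $u_1(\cdot,0)\leq\max(\phi,\varphi)\leq u_2(\cdot,0)$ combined with Lipschitz continuity of $\phi,\varphi$ yields the contradiction. If the max is interior and $u_1(\bar x,q,\bar p,\bar s)\leq\varphi(\bar x)$, then the obstacle inequality together with $u_2\geq\varphi$ (the supersolution satisfies $u_2\geq\varphi$ pointwise) forces $u_1(\bar x,\cdot)-u_2(\bar y,\cdot)\leq\varphi(\bar x)-\varphi(\bar y)\to 0$, again a contradiction. In the remaining interior case, using the natural test functions obtained by freezing one of the variables, the subsolution and supersolution inequalities for the HJB part give
\begin{equation*}
\frac{2(\bar s-\bar s')}{\epsilon}+\frac{2(\bar p-\bar p')}{\epsilon}+H\!\left(\bar s,\bar x,q,\tfrac{2(\bar x-\bar y)}{\epsilon}+\eta\nabla\langle\bar x\rangle\right)\leq \lambda/\bar s^2,
\end{equation*}
\begin{equation*}
\frac{2(\bar s-\bar s')}{\epsilon}+\frac{2(\bar p-\bar p')}{\epsilon}+H\!\left(\bar s',\bar y,q,\tfrac{2(\bar x-\bar y)}{\epsilon}\right)\geq -\lambda/\bar s'^{\,2},
\end{equation*}
and subtracting, using Lipschitz continuity of $H$ in its last argument and $|f|\leq L_f$ from (H1), one obtains a bound of the form $0\leq C(\eta+\lambda)$, contradicting $M>0$ after sending $\eta,\lambda\to 0$.

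The genuinely hard case, and the main obstacle, is when the maximizer lies on the non-local boundary $\bar p=0$ or $\bar p'=0$: there one cannot simply invoke the HJ equation. Here the decision-lag $\delta>0$ is essential and plays the role of a transversality condition, as in \cite{Dharmatti2,BarlesSheetal}. Using Lemma \ref{Moperator} together with the boundary conditions, $u_1(\bar x,q,0,\bar s)\leq (M^+u_1)(\bar x,q,0,\bar s)$ and $u_2(\bar y,q,0,\bar s')\geq (M^-u_2)(\bar y,q,0,\bar s')$, so for any $\epsilon'>0$ there exist $w\in W(q)$ and $p'\geq\delta$ with
\begin{equation*}
u_1(\bar x,q,0,\bar s)-u_2(\bar y,q,0,\bar s')\leq u_1^*(\bar x,g(w,q),p',\bar s)-u_2{}_*(\bar y,g(w,q),p',\bar s')+\epsilon'.
\end{equation*}
The right-hand side is evaluated at a point strictly away from the boundary (by $\delta$), with the same continuous state and time but a new discrete mode $g(w,q)$. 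I would close the argument by an induction on the number of switches remaining within $[0,\bar s]$, which is bounded by $\lfloor\bar s/\delta\rfloor$ thanks to Proposition \ref{finiteJumps}: each use of the non-local inequality strictly decreases this count while keeping $s$ fixed, so after finitely many iterations the supremum is realized either in the interior or at $s=0$, where the preceding cases apply. Sending $\eta,\lambda,\epsilon,\epsilon'\to 0$ in the correct order then yields $M\leq 0$, the desired contradiction.
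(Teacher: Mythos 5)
Your overall strategy (doubling of variables, a localization term to handle unboundedness and the initial time, case analysis on the obstacle, and the decision lag $\delta$ standing in for a transversality condition) is in the same spirit as the paper's proof, and your treatment of the cases $s=0$, the obstacle case $u_1\leq\varphi$, and the interior HJB case is sound. The genuine problem is your handling of the non-local boundary $p=0$. Your inequality
$u_1(\bar x,q,0,\bar s)-u_2(\bar y,q,0,\bar s')\leq u_1^*(\bar x,g(w,q),p',\bar s)-{u_2}_*(\bar y,g(w,q),p',\bar s')+\epsilon'$
only tells you that the (near-)maximal value $M$ is also (nearly) attained at some point with $p'\geq\delta$; it does not say that this new point is a maximizer of your doubled functional $\Psi_\epsilon$, so you cannot re-enter the case analysis there, and no counter visibly decreases. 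The quantity you propose to induct on, ``the number of switches remaining in $[0,\bar s]$,'' is a property of trajectories, not of the static optimization problem $\sup\Psi_\epsilon$: after one application of the switch operator the time variable $\bar s$ is unchanged, so $\lfloor\bar s/\delta\rfloor$ is unchanged, and the next maximizer of $\Psi_\epsilon$ may perfectly well sit on $p=0$ again at a different $(x,s)$. As written, the induction is not well-founded and the argument does not close.

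The paper resolves exactly this difficulty by reversing the order of the two steps. It first proves $u_1\leq u_2$ on all of $\Omega=\{p>0\}$ without ever touching the switching boundary: it shows $v=u_1-u_2$ is a viscosity subsolution of the linear equation $\partial_s v+\partial_p v - L_f|\nabla_x v|\le 0$ and then compares $v$ against the explicit ``friendly giant'' supersolution $\nu(x,p,s)=\eta s^2+\chi_\kappa(-p)$, where $\chi_\kappa$ vanishes for $p\geq\kappa$ and blows up as $p\to 0^+$; the blow-up forces the maximum of $v-\nu$ to stay at $p_0>\kappa$ and then lands it at $s_0=0$, where Case 1 applies, giving $v\le \eta s^2+\chi_\kappa(-p)$ and hence $v\le 0$ on $\{p>0\}$ after $\eta,\kappa\to 0$. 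Only then does it treat $p=0$, and there no iteration is needed: since the switch operator maps into $\{p'\geq\delta\}\subset\{p>0\}$ where comparison is already known, one gets $(M^+u_1)(x_0,q_0,0,s_0)\leq u_1(x_0,q_0,p,s_0)\leq u_2(x_0,q_0,p,s_0)$ for every $p\geq\delta$, takes the infimum to obtain $M^+u_1\leq M^-u_2$, and concludes from the sub/supersolution boundary conditions in a single step. If you want to keep your contradiction framework, the minimal fix is to import this device: add a term $\chi_\kappa(-\bar p)+\chi_\kappa(-\bar p')$ to your penalization so the maximizer of $\Psi_\epsilon$ can never reach $p=0$, prove the strict inequality on $\{p>0\}$ first, and only afterwards deduce the boundary case from the monotonicity of $M^{\pm}$.
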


	The proof is inspired by earlier work on uniqueness results for hybrid control problems.
	The idea is to show that $u_1 \leq u_2$ in all domain $\Omega$ and then on the boundary $p=0$. 
	The main difficulty arises when dealing with points in the boundary $p=0$ where the system has a switching condition given by a non-local switch operator.
	This is tackled by the utilization of \lq\lq{}friendly giant\rq\rq{}-like test functions \cite{BarlesSheetal}, \cite{Ley}. 
	Classically, these functions are used to prove uniqueness for elliptic problems with unbounded value functions where they serve to localize some arguments regardless of the function\rq{}s possible growth at infinity.
	This feature proves itself very useful in our case because one can properly split the domain in no-switching and switching regions.
	In this work, the lag condition for the switch serves as an equivalent to the \lq\lq{}landing condition\rq\rq{}-- which states that after an autonomous switch the system must land at some positive distance away from the autonomous switch set \cite{Dharmatti2}, \cite{Zhang}.

\begin{proof}	
	Let $\Omega$ be defined as above, $\partial \Omega|_T =  X  \times Q \times P \times \{0\}$ and $\partial \Omega|_P =  X  \times Q \times \{0\} \times T$. 

	First, the comparison principle is proved for $\partial \Omega|_T$ (case $1$), followed by $\Omega$ (case $2$) and finally for $\partial \Omega|_P$ (case $3$), which concludes the proof for $\closure{\, \Omega}$.

%%%%%%%%%%%
Case $1$: 
	At a point $(x,q,p,t) \in  \partial \Omega|_T$, from the sub- and supersolution properties,
	\begin{eqnarray*}
		u_1(x,q,p,0) - \max(\phi(x),\varphi(x)) &\leq& 0, \\
		-u_2(x,q,p,0) + \max(\phi(x),\varphi(x)) &\leq& 0,
	\end{eqnarray*}
	which readily yields $u_1 \leq u_2$ in $ \partial \Omega|_T$.
	
%%%%%%%%%%%
Case $2$:
	Start by using to sub- and supersolution properties of $u_1,u_2$ to obtain, in $\Omega$,
	\begin{eqnarray}
		\min(\partial_s u_1 + \partial_p u_1 + H(s,x,q,\nabla_x u_1)  , u_1-\varphi(x) ) &\leq& 0, \label{subVisc}\\
		\min(\partial_s u_2 + \partial_p u_2 + H(s,x,q,\nabla_x u_2)  , u_2-\varphi(x) ) &\geq& 0. \label{superVisc}
	\end{eqnarray}
	Expression \eqref{superVisc} implies that both 
	\begin{equation}
		u_2\geq \varphi(x)
		\label{secondHSide}
	\end{equation}
	and
	\begin{equation}
		\partial_s u_2 + \partial_p u_2 + H(s,x,q,\nabla_xu_2) \geq 0.
		\label{firstHSide}
	\end{equation}
	From \eqref{subVisc}, one has to consider two possibilities.
	The first one is when $u_1 \leq \varphi(x)$.
	If so, together with \eqref{secondHSide}, one has immediately $u_1 \leq u_2$.
	Now, if $\partial_s u_1 + \partial_p u_1 + H(s,x,q,\nabla_x u_1) \leq 0$, one turns to \eqref{firstHSide}.

	Define $v = u_1 - u_2$.
	Notice that $v \in BUSC(\Omega)$.
	The next step is to show that $v$ is a subsolution of
	\begin{equation}
		\partial_s v + \partial_p v + H(s,x,q,\nabla_x v) = 0
		\label{psiHJB}
	\end{equation}
	at $(\bar x, \bar q, \bar p, \bar s) $.	
	
	Let $\psi \in C^2(\Omega)$, bounded, be such that $v-\psi$ has a strict local maximum at $(\bar x, \bar q, \bar p, \bar s) \in \Omega$. 
	Define auxiliary functions over $\Omega^i \times \Omega^i$, $i=0,1$ as 
	\begin{eqnarray}
		 \Phi_\epsilon^i (x,p,s,\xi,\pi,\varsigma)& =& u_1(x,i,p,s) - u_2(\xi, i, \pi, \varsigma) - \psi(x,p,s) \label{auxEpsilon}\\
	\nonumber&&  ~~~~~~~~~~~~- \frac{|x - \xi|^2}{2\epsilon} -  \frac{|p - \pi|^2}{2\epsilon} -  \frac{|s - \varsigma|^2}{2\epsilon}.
	\end{eqnarray}
	
	Because the boundedness of $\psi$, $u_1$ and $u_2$ the suprema points are finite, for each $i=0,1$. Denote $(\supeps) \in \Omega^{\bar q} \times \Omega^{\bar q}$ a point such that 
	\begin{equation*}
		\Phi_\epsilon^{\bar q}(\supeps) = \sup_{ \Omega^{\bar q} \times \Omega^{\bar q}}   \Phi_\epsilon^{\bar q} (x,p,s,\xi,\pi,\varsigma).
	\end{equation*}

	The following lemma establishes some estimations needed further in the proof:

	\begin{lemma}
		Define $\Phi_\epsilon^i$ and $(\supeps)$ as above. 
		Then, as $\epsilon \rightarrow 0$,
		\begin{eqnarray*}
			\frac{|x_\epsilon - \xi_\epsilon|^2}{\epsilon} \rightarrow 0, ~~\frac{|p_\epsilon - \pi_\epsilon|^2}{\epsilon} \rightarrow 0, ~~\frac{|s_\epsilon - \varsigma_\epsilon|^2}{\epsilon} \rightarrow 0, \label{lemmaResultA}\\
 					|x_\epsilon - \xi_\epsilon| \rightarrow 0, ~~|p_\epsilon - \pi_\epsilon|\rightarrow 0, ~~|s_\epsilon - \varsigma_\epsilon| \rightarrow 0,
 \label{lemmaResultB}
		\end{eqnarray*}	
	and $(\supeps) \rightarrow (\bar x,\bar p,\bar s,\bar x,\bar p,\bar s)$
		\label{lemmaConver}
	\end{lemma}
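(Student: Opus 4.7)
The plan is to exploit the defining supremum inequality
$$\Phi_\epsilon^{\bar q}(x_\epsilon,p_\epsilon,s_\epsilon,\xi_\epsilon,\pi_\epsilon,\varsigma_\epsilon) \;\geq\; \Phi_\epsilon^{\bar q}(\bar x,\bar p,\bar s,\bar x,\bar p,\bar s).$$
Since the penalty terms evaluated at the ``diagonal'' point $(\bar x,\bar p,\bar s,\bar x,\bar p,\bar s)$ all vanish, rearranging yields
$$\frac{|x_\epsilon-\xi_\epsilon|^2}{2\epsilon} + \frac{|p_\epsilon-\pi_\epsilon|^2}{2\epsilon} + \frac{|s_\epsilon-\varsigma_\epsilon|^2}{2\epsilon} \;\leq\; \bigl[u_1(x_\epsilon,\bar q,p_\epsilon,s_\epsilon)-u_2(\xi_\epsilon,\bar q,\pi_\epsilon,\varsigma_\epsilon)-\psi(x_\epsilon,p_\epsilon,s_\epsilon)\bigr] - \bigl[u_1(\bar x,\bar q,\bar p,\bar s)-u_2(\bar x,\bar q,\bar p,\bar s)-\psi(\bar x,\bar p,\bar s)\bigr].$$
The right-hand side is bounded above by a constant independent of $\epsilon$ because $u_1$, $u_2$ and $\psi$ are bounded. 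This immediately gives the first weaker conclusion $|x_\epsilon-\xi_\epsilon|,\,|p_\epsilon-\pi_\epsilon|,\,|s_\epsilon-\varsigma_\epsilon| = O(\sqrt{\epsilon}) \to 0$.

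Next I would extract a subsequence (not relabeled) along which $(x_\epsilon,p_\epsilon,s_\epsilon)\to(\hat x,\hat p,\hat s)$ and, thanks to the vanishing differences, $(\xi_\epsilon,\pi_\epsilon,\varsigma_\epsilon)$ converges to the same limit. The key identification step uses that $u_1\in BUSC$ and $-u_2\in BUSC$ (since $u_2\in BLSC$) and $\psi$ is continuous; passing to the limsup in the displayed inequality and dropping the nonnegative penalty terms gives
$$u_1(\hat x,\bar q,\hat p,\hat s) - u_2(\hat x,\bar q,\hat p,\hat s) - \psi(\hat x,\hat p,\hat s) \;\geq\; u_1(\bar x,\bar q,\bar p,\bar s) - u_2(\bar x,\bar q,\bar p,\bar s) - \psi(\bar x,\bar p,\bar s),$$
i.e.\ $(v-\psi)(\hat x,\bar q,\hat p,\hat s)\geq (v-\psi)(\bar x,\bar q,\bar p,\bar s)$ where $v=u_1-u_2$. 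Since $(\bar x,\bar q,\bar p,\bar s)$ is a \emph{strict} local maximum of $v-\psi$ and the approximating points lie in a neighborhood of it, this forces $(\hat x,\hat p,\hat s)=(\bar x,\bar p,\bar s)$, establishing the convergence of $(\supeps)$ to $(\bar x,\bar p,\bar s,\bar x,\bar p,\bar s)$.

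Finally, to upgrade the bounded control $O(1)$ on the scaled penalty terms to the claim that they \emph{tend to zero}, I would use the sharp version of the same inequality: rearranging the supremum inequality as above and now passing to the $\limsup$ as $\epsilon\to 0$, the right-hand side converges to $0$ by the semi-continuity estimates just established and the continuity of $\psi$, while the left-hand side is a sum of nonnegative quantities. Therefore
$$\limsup_{\epsilon\to 0}\left(\frac{|x_\epsilon-\xi_\epsilon|^2}{2\epsilon}+\frac{|p_\epsilon-\pi_\epsilon|^2}{2\epsilon}+\frac{|s_\epsilon-\varsigma_\epsilon|^2}{2\epsilon}\right) \;\leq\; 0,$$
which proves each term individually vanishes.

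The main obstacle is step two, the identification of the cluster point with $(\bar x,\bar p,\bar s)$: it crucially relies on the strict local maximality assumed for $v-\psi$ at $(\bar x,\bar q,\bar p,\bar s)$ (which is why the construction in the outer argument insists on a strict max), and on the careful one-sided use of semi-continuity for $u_1$ and $u_2$ separately. If the max were only non-strict, one would only recover convergence to \emph{some} maximizer, and the subsequent viscosity inequality argument in the outer proof would need modification.
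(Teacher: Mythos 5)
Your proof is correct, and it is in fact more complete than the paper's own argument. The difference in route is small but worth noting: the paper obtains the boundedness of the scaled penalty terms by the symmetric comparison $2\Phi_\epsilon(\supeps)\geq \Phi_\epsilon(x_\epsilon,p_\epsilon,s_\epsilon,x_\epsilon,p_\epsilon,s_\epsilon)+\Phi_\epsilon(\xi_\epsilon,\pi_\epsilon,\varsigma_\epsilon,\xi_\epsilon,\pi_\epsilon,\varsigma_\epsilon)$, whereas you compare against the single diagonal point $(\bar x,\bar p,\bar s,\bar x,\bar p,\bar s)$; both give the same $O(\sqrt{\epsilon})$ estimate on the differences. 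Where the two arguments genuinely diverge is afterward: the paper stops at the bound $|x_\epsilon-\xi_\epsilon|^2/\epsilon\leq C^*$ and only deduces that the \emph{differences} $|x_\epsilon-\xi_\epsilon|$, $|p_\epsilon-\pi_\epsilon|$, $|s_\epsilon-\varsigma_\epsilon|$ vanish, then asserts the convergence of $(\supeps)$ to the strict maximum without detail; it never actually establishes the first displayed claim of the lemma, namely that the quotients $|x_\epsilon-\xi_\epsilon|^2/\epsilon$ themselves tend to zero. Your three-step structure (boundedness, identification of the cluster point via semicontinuity of $u_1$, $-u_2$ and strict maximality, then a second pass to the limsup to force the nonnegative penalty terms to vanish) is the standard complete argument in the spirit of Lemma 3.1 of the Crandall--Ishii--Lions user's guide, and it supplies exactly the step the paper glosses over. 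One minor caveat applies to both proofs: since $X=\RRR^d$ is unbounded, the existence of a maximizer of $\Phi_\epsilon^{\bar q}$ and the compactness needed to extract your convergent subsequence require localizing to a compact neighborhood of the strict local maximum; you acknowledge this implicitly (``the approximating points lie in a neighborhood of it''), and making the localization explicit would close the argument fully.
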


	\begin{proof}
		Writing 
		$$2\Phi_\epsilon^i(\supeps) \geq \Phi_\epsilon^i(x_\epsilon,p_\epsilon,s_\epsilon,x_\epsilon,p_\epsilon,s_\epsilon) + \Phi_\epsilon^i(\xi_\epsilon,\pi_\epsilon,\varsigma_\epsilon,\xi_\epsilon,\pi_\epsilon,\varsigma_\epsilon),$$
		for $i=0,1$, one obtains,
		\begin{eqnarray*}
		\frac{|x - \xi|^2}{\epsilon} +  \frac{|p - \pi|^2}{\epsilon} +  \frac{|s - \varsigma|^2}{\epsilon} &\leq& (u_1 + u_2)(x_\epsilon,i,p_\epsilon,s_\epsilon) - (u_1 + u_2)(\xi_\epsilon,i,\pi_\epsilon,\varsigma_\epsilon) + \\
		&&  ~~~~~~~~~~~~~~~~~~~~~~~~~~~~~~~~~~   \psi(x_\epsilon,p_\epsilon,s_\epsilon) -  \psi(\xi_\epsilon,\pi_\epsilon,\varsigma_\epsilon) ,
		\end{eqnarray*}•
		which means that, since $\psi$, $u_1$ and $u_2$ are bounded that	
		\begin{equation}
			\frac{|x_\epsilon - \xi_\epsilon|^2}{\epsilon}  \leq C*, ~~\frac{|p_\epsilon - \pi_\epsilon|^2}{\epsilon}  \leq C*, ~~\frac{|s_\epsilon - \varsigma_\epsilon|^2}{\epsilon}  \leq C^*,
		\label{lemmaInegEps}
		\end{equation}•
		where $C^*$ depends on the $\sup |u_1|$, $\sup |u_2|$, $\sup |\psi|$ and is independent of $\epsilon$.
		Expression \eqref{lemmaInegEps} yields 
		\begin{equation*}
			|x_\epsilon - \xi_\epsilon| \leq \sqrt{\epsilon C^*}, ~~|p_\epsilon - \pi_\epsilon| \leq  \sqrt{\epsilon C^*}, ~~|s_\epsilon - \varsigma_\epsilon| \leq  \sqrt{\epsilon C^*}.
		\end{equation*}•
		which implies that the doubled terms tend to zero.
		
		Since $ (\bar x,\bar q,\bar p,\bar s)$ is a strict maximum of $v-\psi$, one gets  $(\supeps) \rightarrow (\bar x,\bar p,\bar s,\bar x,\bar p,\bar s)$.
		Remark that, since $\bar p >0$, one can always choose a suitable subsequence $\epsilon_n \rightarrow 0$ such that all $p_{\epsilon_n} >0$, avoiding thus touching the switching boundary.	
	\end{proof}
	
	A straightforward calculation allows to show that there exists $a,b \in \RRR$ such that
	\begin{eqnarray*}
	(a,b,D_\epsilon) &\in& D^- u_2(\xi_\epsilon,\bar q,\pi_\epsilon,\varsigma_\epsilon) \\
	(a + \partial_s \psi ,b + \partial_p \psi,D_\epsilon + \nabla_x \psi) &\in& D^+ u_1(x\epsilon,\bar q,p_\epsilon,s_\epsilon) ,
	\end{eqnarray*}
	where $D^-,D^+$ respectively denote the sub- and super differential \cite{Crandall92user’sguide} and $D_\epsilon= 2|x_\epsilon - \xi_\epsilon| / \epsilon$, which implies
	\begin{eqnarray*}
	a+b+ H(\varsigma_\epsilon,\xi_\epsilon,\bar q,D_\epsilon) &\geq& 0 \\
	a + \partial_s \psi + b + \partial_p \psi+ H(s_\epsilon,x_\epsilon,\bar q,D_\epsilon  + \nabla_x \psi) &\leq& 0,
	\end{eqnarray*}
	which in turn yields, as $\epsilon \rightarrow 0$,
	\begin{equation*}
	\partial_s \psi+ \partial_p \psi - L_f | \nabla_x \psi| \leq 0
	\end{equation*}
	at $(\bar x, \bar q, \bar p, \bar s) \in \Omega$.
	By adequately choosing the test functions $\psi$, one can repeat the arguments to show that this assertion holds for any point in $\Omega$.
	Thus, this establishes that $v$ is a subsolution of \eqref{psiHJB} in $\Omega$.
	
	Now, take $\kappa >0$ and define a non-decreasing differentiable function $\chi_\kappa : (-\infty,0) \rightarrow \RRR^+$ such that
	\begin{equation*}
		\chi_\kappa(x) = 0 ,~ x \leq - \kappa~;~\chi_\kappa(x) \rightarrow \infty, ~ x\rightarrow 0.
	\end{equation*}
	
	Take $\eta >0$,and define a test function 
	\begin{equation*}
		\nu(x,p,s) = \eta s^2  + \chi_\kappa(-p).
	\end{equation*}
	Observe that $v - \nu$ achieves a maximum at a finite point $(x_0,\bar q,p_0,s_0) \in \Omega$.
	Since $\kappa$ can be made arbitrarily small one can consider $p_0 > \kappa$ without loss of generality.
	Therefore, using the subsolution property of $v$, by a straightforward  calculation one has
	\begin{equation*}
		2 \eta s_0 \leq 0,
	\end{equation*}
	since $\chi_\eta\rq{}(-p_0) = 0$.
	The above inequality implies that $s_0 = 0$.
	Noticing that $\nu(x_0,p_0,s_0) = v(x_0,\bar q,p_0,s_0) = 0$, it follows
	\begin{equation*}
		v(x,\bar q,p,s) \leq  \eta s^2 + \chi_\kappa(-p) 
	\end{equation*}
	for all $s\in T$, $x\in X$ and $p > \kappa$.
	Letting $\eta \rightarrow 0$, $\kappa \rightarrow 0$ and from the arbitrariness of $\bar q$, we conclude that $v\leq 0$ in $\Omega$.

%%%%%%%%%%%
	Case $3$:
	In this case the switch lock variable arrives at the boundary of the domain, incurring thus a switch, as all others variables remain inside the domain. 
	For all $(x_0,q_0,p_0,s_0) \in \partial \Omega|_P$, for any $p \geq \delta$ one has (using case $2$ and noticing that $M^+u_1 = Mu_1$ and $M^- u_2 = Mu_2$)
	\begin{equation*}
		 (M^+ u_1)(x_0,q_0,p_0,s_0) \leq u_1(x_0,q_0,p,s_0) \leq u_2(x_0,q_0,p,s_0).
	\end{equation*}•
	Taking the infimum with respect to $p$, the above expression yields $M^+u_1 \leq M^-u_2$ in $\partial \Omega|_P$.
	This suffices to conclude, since that by the sub- and supersolution properties
	\begin{equation*}
	v = u_1 - u_2 \leq M^+u_1 - M^-u_2.
	\end{equation*}
\end{proof}

	%Apart from being an essential result, the comparison principle can be used to establish bounds for the numerical  solutions of \eqref{HJ1}-\eqref{HJ3}.
%%%%%%%%%%%%%%%%%%%%%%%	
	\section{Numerical Analysis}

%%%%%%%%%%%%%%%%%%%%%%%	
		\subsection{Numerical Scheme and Convergence}

	Equations \eqref{HJ1}-\eqref{HJ3} can be solved using a finite differences scheme. 
	This section proposes a class of discretization schemes and shows its convergence using the Barles-Souganidis \cite{souganidis} framework.

	Set mesh sizes $\Delta x >0$, $\Delta p >0$, $\Delta t >0$ and denote the discrete grid point by $(x_{I},p_k,s_n)$, where $x_I = I\Delta x$, $p_k = k \Delta p$ and $s_n = n\Delta t$, with $I \in \ZZZ^d$ and $k,n$ integers. 
	The approximation of the value function is denoted
	\begin{equation*}
		 v(x_I,q,p_k,s_n) = \vv{Ik}{q}{n}
	\end{equation*}
	and the penalization functions are denoted $\phi(x_I) = \phi_I$, $\varphi(x_I) = \varphi_I$.	
	Define the following grids:
	\begin{eqnarray*}
		G^\# &=& I\Delta x \times Q \times \Delta p\{0,1,\cdots,n_p\} \times \Delta t \{0,1,\cdots,n_s \}, \\
		G^\#_H &=& \Delta t \{0,1,\cdots,n_s \} \times I\Delta x \times Q 
	\end{eqnarray*}
	and the discrete space gradient at point $x_I$ for any general function $\mu$:
	\begin{equation*}
	D^\pm \mu(X_I) = D^\pm \mu_I = \left( D^\pm_{x_1} \mu_I ,\cdots,  D^\pm_{x_d} \mu_I \right),
	\end{equation*}•
	where
	\begin{equation*}
	D^\pm_{x_j} \mu_I = \pm \frac{\mu_{I^{j,\pm}} - \mu_I }{\Delta x},
	\end{equation*}•
	with
	\begin{equation*}
	I^{j,\pm} = (i_1,\cdots,i_{j-1},i_j \pm 1,\cdots,i_d).
	\end{equation*}•
	
	Define a numerical Hamiltonian $\HH : G_H^\# \times \RRR^d \times \RRR^d \rightarrow \RRR$ destined to be an approximation of $H$.
	We assume that $\HH$ verifies the following hypothesis:
	
	\begin{description}
		\item[(H6)] There exists $L_{H_1},L_{H_2} > 0$ such that, for all $s,x,q \in G_H^\#$ and $A^+,A^-,B^+,B^- \in \RRR^d$,
			\begin{eqnarray*}	
			\nonumber	| \HH(s,x,q, A^+,A^-) - \HH(s,x,q, B^+,B^-) | &\leq& L_{H_1}(||A^+ - B^+|| + ||A^- - B^- || \\
			 ||\HH(s,x,q, A^+,A^-)|| &\leq& L_{H_2}(||A^+ + A^-||).
			\end{eqnarray*}
		\item[(H7)]  The Hamiltonian satisfies the monotonicity condition for all $s,x,q \in G_H^\#$ and almost every $A^+,A^-\in \RRR^d$:
		\begin{equation*}
			\partial_{A_i^+}\HH(s,x,q, A^+,A^-) \leq 0, \text{ and } \partial_{A_i^-}\HH(s,x,q, A^+,A^-) \geq 0.
		\end{equation*}•
	
		\item[(H8)]  There exists $\L_{H_3} >0$ such that for all $s,x,q \in G_H^\#$, $s\rq{},x\rq{},q\rq{} \in T \times X \times Q$ and $A \in \RRR^d$,
		\begin{equation*}
		|\HH(s,x,q,A,A) - H(s\rq{},x\rq{},q\rq{},A)| \leq L_{H_3}(|s-s\rq{} | + ||x-x\rq{} || + |q-q\rq{}|).
		\end{equation*}•		
					
	\end{description}	
		
	Let $\Phi : \Omega \rightarrow \RRR$, $h = (\Delta x, \Delta p, \Delta t)$ and set
	\begin{eqnarray*}
	\nonumber	 S^\Omega _h(x,q,p,s,\lambda;\Phi)& =&  \min \left( \lambda - \varphi_{I} ,  \HH(s,x,q, D^+ \Phi(x,q,p,s),D^- \Phi(x,q,p,s))+ \right. \\ 
	\nonumber &&   ~~~~~~~~~~~~~~~~~~~~~~\left.  \frac{\lambda - \Phi(x,q,p,s)}{\Delta t} + \frac{  \lambda -  \Phi(x,q,p-\Delta p,s+\Delta t) }{\Delta p}  \right).
	\end{eqnarray*}•
	
	Now, consider the following scheme 
	\begin{equation}
	 S _h(x,q,p,s,\lambda;\Phi) =  \left\{ \begin{array}{lll}
	 S^\Omega _h(x,q,p,s,\lambda;\Phi) &\text{if}& (x,q,p,s) \in \Omega \\
	 \lambda - \min_{w\in W(x,q),p\rq{}\geq \delta} \Phi(x,g(w,q),p\rq{},s) &\text{if}& p=0,
	 \end{array} \right.
	 \label{scheme}
	\end{equation}
	along with the following operator
	\begin{equation}
	 \FFF(x,q,p,s,u,\nabla u) =  \left\{ \begin{array}{lll}
  u-\varphi(x) \bigwedge \partial_s u + \partial_p u + H(s,x,q,\nabla_x u)  &\text{if}&  (x,q,p,s) \in \Omega \\
			u(x,q,p,s) -(Mu)(x,q,p,s) &\text{if}&  p=0. \\		
			 \end{array} \right.
	\label{contscheme}
	\end{equation}
		
	\begin{proposition}
		Let $\Phi \in C^\infty_b(\Omega)$.
		Under hypothesis $(H6-H8)$ and  the CFL condition
		\begin{equation}
		\Delta t \left( \frac{1}{\Delta p} + \frac{1}{\Delta x} \sum_{i=1}^d \partial_{A_i^+}\HH +\partial_{A_i^-}\HH \right)  \leq 1
		\label{CFLcond}
		\end{equation}•
		the discretization scheme \eqref{scheme} of \eqref{contscheme} is stable, monotone and consistent.
		
		Moreover, a solution $u_h$ of \eqref{scheme} converges towards the solution $u$ of \eqref{contscheme} as $h \rightarrow 0$.
%		The solution $u_h$ of \eqref{DFscheme} converges uniformly towards $u$, solution of \eqref{HJ1}-\eqref{HJ4}.
	\end{proposition}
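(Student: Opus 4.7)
The plan is to verify the three Barles--Souganidis hypotheses (monotonicity, stability, consistency) for the scheme $S_h$ and then invoke the comparison principle of Theorem~\ref{compPrinc} to conclude convergence of the half-relaxed limits towards the unique viscosity solution of \eqref{HJ1}--\eqref{HJ3}. First, I would check \emph{monotonicity}. Rewriting $S_h^\Omega$ as an implicit equation in $\lambda$, one needs to show that at fixed $\lambda$ the scheme is non-increasing in each of the values $\Phi(x\pm\Delta x\, e_j,q,p,s)$, $\Phi(x,q,p,s)$ and $\Phi(x,q,p-\Delta p,s+\Delta t)$. The monotonicity in the space neighbours follows from (H7) applied to the numerical Hamiltonian, and the monotonicity in the temporal/switch-lock values is delivered precisely by the CFL condition \eqref{CFLcond} after computing the partial derivatives of the bracketed expression. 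At the boundary $p=0$ the scheme reduces to $\lambda - \min_{w,p'} \Phi(x,g(w,q),p',s)$, which is evidently non-increasing in $\Phi$.

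Second, I would establish \emph{stability} by showing that any solution $u_h$ of \eqref{scheme} satisfies $\|u_h\|_\infty \le L_K$ uniformly in $h$, using an induction on the time level together with the truncation built into $\phi$ and $\varphi$; the interior update is a convex combination of values lying in $[-L_K,L_K]$ thanks to the CFL, and the boundary update is an infimum of such values. For \emph{consistency}, fix a smooth test function $\psi$ and $(x,q,p,s)\in\Omega$ with $p>0$. A Taylor expansion of $\psi$ at $(x,q,p,s)$ gives
\begin{equation*}
\frac{\psi(x,q,p,s)-\psi(x,q,p-\Delta p,s+\Delta t)}{\Delta p}\to -\partial_p\psi-\frac{\Delta t}{\Delta p}\partial_s\psi\cdot(\cdots),
\end{equation*}
so that combining the temporal and switch-lock finite differences, together with hypothesis (H8) on the numerical Hamiltonian, we recover $\partial_s\psi+\partial_p\psi+H(s,x,q,\nabla_x\psi)$ in the limit; the $\min$ with $\lambda-\varphi_I$ passes to the limit by continuity of $\min$. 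At a point on $\{p=0\}$, the boundary part of $S_h$ is already of the continuous form $\lambda-(M\psi)(x,q,0,s)$ up to quantisation error in the $p'\ge\delta$ infimum, so consistency there is immediate.

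Once these three properties are in hand, I would define the half-relaxed limits
\begin{equation*}
\bar u(x,q,p,s) := \limsup_{\substack{h\to 0\\(x',q',p',s')\to(x,q,p,s)}} u_h(x',q',p',s'), \qquad \underline u := \liminf \ldots,
\end{equation*}
and apply the standard Barles--Souganidis argument: monotonicity plus consistency imply that $\bar u$ is a bounded u.s.c.\ subsolution of \eqref{HJBinsideNUM}--\eqref{HJBmainNUM} and $\underline u$ a bounded l.s.c.\ supersolution; stability guarantees that both are finite. The comparison principle of Theorem~\ref{compPrinc} then forces $\bar u \le \underline u$ on $\overline\Omega$, while the reverse inequality is trivial, so $\bar u = \underline u = u$ and $u_h \to u$ locally uniformly.

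I expect the main obstacle to be the verification at the switching boundary $p=0$: the non-local operator $M$ interacts with the half-relaxed limits in a subtle way (one must ensure $(M u_h)^* \le M^+ \bar u$ and $(M u_h)_* \ge M^- \underline u$, in the spirit of Lemma~\ref{Moperator}), and the consistency there requires controlling how the discretised infimum over $\{p'\ge\delta\}$ behaves as the grid is refined. This is exactly the point where the lag condition $\delta>0$ plays a decisive role, since it keeps the set of admissible post-switch lock values uniformly bounded away from the boundary and makes the discrete infimum converge to the continuous one.
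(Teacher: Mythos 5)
Your proposal follows essentially the same route as the paper: verify monotonicity (via (H7) and the CFL condition), stability, and consistency (via (H8)), form the half-relaxed limits $\overline u$ and $\underline u$, show they are sub- and supersolutions — treating the interior case $p_0>0$ and the switching boundary $p_0=0$ separately, with the non-local operator handled through the semicontinuity properties of $M^{\pm}$ as in Lemma~\ref{Moperator} — and conclude by the comparison principle of Theorem~\ref{compPrinc}. The point you flag as the main obstacle (the interaction of $M$ with the relaxed limits at $p=0$ and the role of $\delta>0$) is precisely where the paper invokes Lemma~\ref{Moperator}, so the two arguments coincide in substance.
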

	
	\begin{proof}
		The proof follows the lines used in the framework of Barles-Souganidis \cite{souganidis}.
		The goal is to show that the numerical scheme solutions\rq{} envelopes
		\begin{eqnarray*}
			\underline u(x\rq{},q\rq{},p\rq{},s\rq{}) = \liminf\limits_{\substack{(x,q,p,s) \to (x\rq{},q\rq{},p\rq{},s\rq{})\\ h \to 0}} u_h(x,q,p,s) \\
			\overline u(x\rq{},s\rq{},p\rq{},s\rq{}) = \limsup\limits_{\substack{(x,q,p,s) \to (x\rq{},q\rq{},p\rq{},s\rq{})\\ h \to 0}} u_h(x,q,p,s),
		\end{eqnarray*}•
		are respectively supersolution and subsolution of \eqref{contscheme}.
		Then, using the comparison principle in theorem \ref{compPrinc}, one obtains $\overline u \leq \underline u$.
		However, since the inverse inequality is immediate (using the definition of limsup and liminf, one gets $u \equiv \underline u = \overline u$ achieving thus the convergence.
		
		Only the subsolution property of $\overline u$ is presented next, the proof of the supersolution property of $\underline u$ being very alike.
		
		Firstly, the proofs shows that \eqref{scheme} is stable, monotone and consistent.
				Observe that $S$ is proportional to $-\Phi$ in the terms outside the Hamiltonian.
		Since fluxes $\HH$ are monotone by hypothesis $(H7)$ (see \cite{shuOsher} for details in monotone Hamiltonian fluxes) whenever the CFL condition \eqref{CFLcond} is satisfied, the monotonicity of $S$ follows.
		The stability is ensured by the boundedness of $\Phi$ and hypothesis $(H6)$.
		Finally, hypothesis $(H8)$ and lemma \ref{Moperator} are used in a straightforward fashion to obtain the consistency properties below:
		\begin{eqnarray*}
			 \limsup\limits_{\substack{(x\rq{},q\rq{},p\rq{},s\rq{}) \to (x,q,p,s)\\ h \to 0}} S _h(x\rq{},q\rq{},p\rq{},s\rq{},\lambda;\Phi) \leq  \FFF^*(x,q,p,s,\Phi,\nabla \Phi)  \\
			 \liminf\limits_{\substack{(x\rq{},q\rq{},p\rq{},s\rq{}) \to (x,q,p,s)\\ h \to 0}} S _h(x\rq{},q\rq{},p\rq{},s\rq{},\lambda;\Phi) \geq  \FFF_*(x,q,p,s,\Phi,\nabla \Phi) 
		\end{eqnarray*}•
		
		Now, choose  $\Phi \in C^\infty_b(\Omega)$ such that $\overline u - \Phi$ has a strict local maximum at $(x_0,q_0,p_0,s_0) \in \closure{\, \Omega}$ (without loss of generality assume $(\overline u - \Phi)(x_0,q_0,p_0,s_0) = 0$).
		
		First, suppose $p_0>0$.
		Then there exists a ball centered in $(x_0,q_0,p_0,s_0)$ of radius $r >0$ such that $\overline u(x,q,p,s) \leq \Phi(x,q,p,s),~\forall (x,q,p,s) \in B((x_0,q_0,p_0,s_0),r) \subset \Omega$.
		Construct sequences $(x_\epsilon,q_\epsilon,p_\epsilon,s_\epsilon) \to (x_0,q_0,p_0,s_0) $ and $h_\epsilon \to 0$ as $\epsilon \to 0$ such that $u_{h_\epsilon}(x_\epsilon,q_\epsilon,p_\epsilon,s_\epsilon)  \to \overline u(x_0,q_0,p_0,s_0)$ and $(x_\epsilon,q_\epsilon,p_\epsilon,s_\epsilon) $ is a maximum of $u_{h_\epsilon} - \Phi$ in $B((x_0,q_0,p_0,s_0),r)$.
		Denote $\zeta_\epsilon = (u_{h_\epsilon} - \Phi)(x_\epsilon,q_\epsilon,p_\epsilon,s_\epsilon)$.
		(Remark that $\zeta_\epsilon \to 0$ as $\epsilon \to 0$).
		
		Then, $u_{h_\epsilon} \leq \Phi + \zeta_\epsilon$ inside the ball and since $S(x_\epsilon,q_\epsilon,p_\epsilon,s_\epsilon,u_{h_\epsilon}(x_\epsilon,q_\epsilon,p_\epsilon,s_\epsilon);u_{h_\epsilon})=0$, by the monotonicity property one obtains
		\begin{equation*}
			S(x_\epsilon,q_\epsilon,p_\epsilon,s_\epsilon,\Phi(x_\epsilon,q_\epsilon,p_\epsilon,s_\epsilon) + \zeta_\epsilon;\Phi + \zeta_\epsilon) \leq 0.
		\end{equation*}•
		Taking the limit (inf) $\epsilon \to 0$ together with the consistency of the scheme, one obtains the desired inequality
		\begin{equation*}
			\FFF_*(x_0,q_0,p_0,s_0,\Phi, \nabla \Phi) \leq 0.
		\end{equation*}•
		
		Suppose now that $p_0=0$. 
		Construct sequences $(x_\epsilon,q_\epsilon,p_0,s_\epsilon) \to (x_0,q_0,p_0,s_0) $ and $h_\epsilon \to 0$ as $\epsilon \to 0$ such that $u_{h_\epsilon}(x_\epsilon,q_\epsilon,p_0,s_\epsilon)  \to \overline u(x_0,q_0,p_0,s_0)$.
		Then,
		\begin{eqnarray*}
			 \liminf\limits_{\substack{x_\epsilon \to x_0 \\ q_\epsilon \to q_0 \\ s_\epsilon \to s_0 \\ h_\epsilon \to 0}} S_h(x_\epsilon,q_\epsilon,p_0,s_\epsilon,\Phi(x_\epsilon,q_\epsilon,p_0,s_\epsilon),\Phi) &=&   \liminf\limits_{\substack{x_\epsilon \to x_0 \\ q_\epsilon \to q_0 \\ s_\epsilon \to s_0 \\ h_\epsilon \to 0}} (\Phi - M \Phi)(x_\epsilon,q_\epsilon,p_0,s_\epsilon) \\
			  &=&  (\Phi - (M\Phi)_*)(x_0,q_0,p_0,s_0).
		\end{eqnarray*}•
		Since each $u_{h_\epsilon}$ is a  solution of \eqref{scheme}, using lemma \ref{Moperator} the above expression yields at the point $(x_0,q_0,p_0,s_0)$:
		\begin{eqnarray*}
		0 &=&  (\Phi - (M\Phi)_*)(x_0,q_0,p_0,s_0) \\
			&\geq&  (\Phi - (M\Phi)^*)(x_0,q_0,p_0,s_0) \\
			 &\geq&  (\Phi - (M^+\Phi))(x_0,q_0,p_0,s_0) \\
			 &=&\FFF_*(x_0,q_0,p_0,s_0,\Phi, \nabla \Phi)
		\end{eqnarray*}•
		achieving the desired inequality.		
	\end{proof}	
	
%%%%%%%%%%%%%%%%%%%%%%%
		\subsection{Numerical Simulations}

	For the numerical simulations, the numerical Hamiltonian $\HH$ is discretized using a monotone  Local Lax-Friedrichs scheme \cite{shuOsher} (where the two components of the gradient are explicit):
	\begin{eqnarray*}
		\HH \left(t,x,q;a^+,a^-,b^+,b^-\right) &=& H \left(t,x,q;\frac{a^++a^-}{2},\frac{b^++b^-}{2}\right) - \\ 
		&& ~~~~~~~~~~~~~~~~~~~~~~~~~~ c_a  \left(\frac{a^+-a^-}{2}\right) - c_b  \left(\frac{b^+-b^-}{2}\right)
	\end{eqnarray*}•
	where $a^\pm = D_i^\pm v$, $b^\pm = D_j^\pm v$ and the constants $c_a,c_b$ are defined as 
	\begin{eqnarray}
		c_a = \sup_{t,x,q,r} | \partial_{r_a} H(t,x,q,r) | \\
		c_b = \sup_{t,x,q,r} | \partial_{r_b} H(t,x,q,r) |.
	\end{eqnarray}•

	Setting $u_h^\# = [\vv{Ik}{q}{n}, \vv{I^{1^\pm} k}{q}{n}, \cdots,  \vv{I^{d^\pm} k}{q}{n}, \vv{Ik - 1}{q}{n}]$, the equation 
	$$S _h( x_{I},q,p_k,s_{n+1},\vv{Ik}{q}{n+1};u_h^\#) =0$$
	allows an explicit expression of $\vv{}{}{n+1}$ as a function of past values $\vv{}{}{n}$:

	\begin{equation}
		\vv{Ik}{q}{n+1} = \left\{ \begin{array}{lll}
		 \varphi_{I} \bigvee \vv{Ik}{q}{n} - \Delta t  \left(  \frac{ \vv{Ik}{q}{n} - \vv{Ik-1}{q}{n} }{\Delta p} + \HH(t_n,x_{I},q,D^- \vv{Ik}{q}{n}, D^+ \vv{Ik}{q}{n})  \right) \\
		  ~~~~~~~~~~~~~~~~~~~~~~~~~~~~~~~~~~~~~~~~~~~~~~~~~~~~~~~~~~~~~~~~~~~~\text{ if }  (I,q,k,n) \in \Omega^\# \\
		 \min_{w\in W(x_I,q), k' \geq \delta/\Delta p} \vv{Ik'}{g(w,q)}{n}~~~~~~~~~~~~~~~~~~~~~\,~~~~~~ \text{ if } k=0.
		 \end{array} \right.
	\label{schemeSimulation}
	\end{equation}

	In order to illustrate the work presented, a simple vehicle model is used in the simulations.
	This model allows an analytic evaluation of it\rq{}s autonomy and is suitable for an a posteriori verification of the results.
	
	The switch dynamics is given by
	 \begin{equation*}
		g(w,q) = |q-w|.
	\end{equation*}
	The energetic dynamical model is given by $ f(u,q) = ( -a_x + qu , -q(a_y+u))$,
%	 \begin{equation}
%		f(u,q) = \left(\begin{array}{c}
%			 -a_x + qu \\ (-a_y-u)q
%			\end{array}    \right),
%	\label{dynSimple}
%	\end{equation}
	where $a_x,a_y >0$ are constant depletion rates of the battery\rq{}s electric energy and the reservoir\rq{}s fuel (whenever the RE is on), respectively.
	The control domain is taken $U=[0,u_{\text{max}}]$.
	
	Considering this dynamics, an exact autonomy of the system can be evaluated analytically.
	Given initial conditions $(x_0,y_0)$ the shortest time to empty the fuel reservoir is given by $t^* = y_0 / (a_y+u_{\text{max}})$.
%	\begin{equation*}
%	t^* = \frac{y_0}{a_y+u_{\text{max}}}.
%	\end{equation*}•
	The SOC evaluated at this instant is given by $x(t^*) = x(0) -t^*(a_x - u_{\text{max}})$. 
	If $x(t^*)\leq 0$, it means the fuel cannot be consumed fast enough before the battery is depleted.  
	This condition can be expressed in terms of the parameters of the model as $x_0(a_y+u_{\text{max}})  \leq y_0 (a_x -u_{\text{max}})$.
	In this case, the autonomy is given by $T^0 = x_0/(a_x-u_{\text{max}}) $. If not, the autonomy is given by $T^1 = (x_0 + u_{\text{max}}t^*) / a_x$.
%	\begin{equation}
%		\frac{x_0}{y_0} \leq \frac{a_x -u_{\text{max}}}{a_y+u_{\text{max}}}
%	\label{fastSoc}
%	\end{equation}•

	Simulations are running using $\Delta x = \Delta y = 0.025$, $\Delta p = 0.05$ and $\Delta t$ is calculated using \eqref{CFLcond}.
	Several instances of $x_0,y_0,a_x,a_y,u_{\text{max}}$ are tested, all with a lag of $\delta = 2$, and the theoretical and evaluated autonomy are compared.
	For numerical purposes, the set of initial energies is a ball of radius $2\Delta x$ around $(x_0,y_0)$ and the admissible region is set as $K=[0,1]^2$.
	Remark that \eqref{distance}, \eqref{penal} give a natural value $\tilde L_K$ for the numerical boundary outside set $K$.
	
	The simulated instances use $a_x = 0.1$, $a_y=0.15$, $u_{\text{max}}=0.07$.
	Tests are made using three initial conditions.
	Table \ref{tab:convSimulations} groups the error $\varepsilon =|T^* - s^*|$ between exact autonomy and the autonomy \eqref{hybridAutonomy} evaluated using the scheme \eqref{schemeSimulation}   and the algorithm running times.
	
	Figures \ref{results1}, \ref{results2} and \ref{results3} show the minimum of the value function for all values of $p \in P^\#$ for $q=0,1$ and the corresponding reachable set of the instance $(x_0,y_0)=(0.3,0.8)$ at times $s=2.65$, $s=3.00$, $s=3.75$ respectively.
	
	\begin{table}
	\centering
		\caption{Convergence results and running times.}
		\begin{tabular}{|c|c|c|c|}
		\hline
		$(x_0,y_0)$ &$\Delta x $ & $\varepsilon$  &  CPU\footnotemark[1]  running time(s)  \\ \hline
		\multirow{4}{*}{(0.5,0.5)}	 &$0.05$ & $0.877$	& $34$		\\ 
							 &$0.04$ & $0.613$	& $57$		\\ 
							 &$0.03$ & $0.860$	& $111$ 		\\ 
							 &$0.02$ & $0.326$	& $326$ 		\\ \hline
		 \multirow{4}{*}{(0.3,0.8)} 	&$0.05$ & $1.923$	& $28$		\\ 
							 &$0.04$ & $1.179$	& $46$		\\ 
							 &$0.03$ & $0.578$	& $89  $		\\ 
							 &$0.02$ & $0.038$	& $253$		\\ \hline
		\end{tabular}
		\label{tab:convSimulations}
	\end{table}

%	\begin{table}[htbp]
%	\centering
%		\begin{tabular}{|c|c|c|}
%		\hline
%		$\Delta x $ & $\varepsilon$  &  CPU\footnotemark[1]  running time(s)  \\ \hline
%		 $0.05$ & 1.923	& $28$		\\ 
%		 $0.04$ & 1.179	& $46$		\\ 
%		 $0.03$ & 0.578	& $89  $		\\ 
%		 $0.02$ & 0.038	& $253$		\\ \hline
%		 \end{tabular}
%		\caption{Convergence results and running times for $(x_0,y_0) = (0.3,0.8)$. Exact autonomy at $T^*=6.591$.}
%		\label{tab:convSimulations}
%	\end{table}

		\begin{figure}[!htpb]
		\begin{center}
			\includegraphics[width=1\columnwidth]{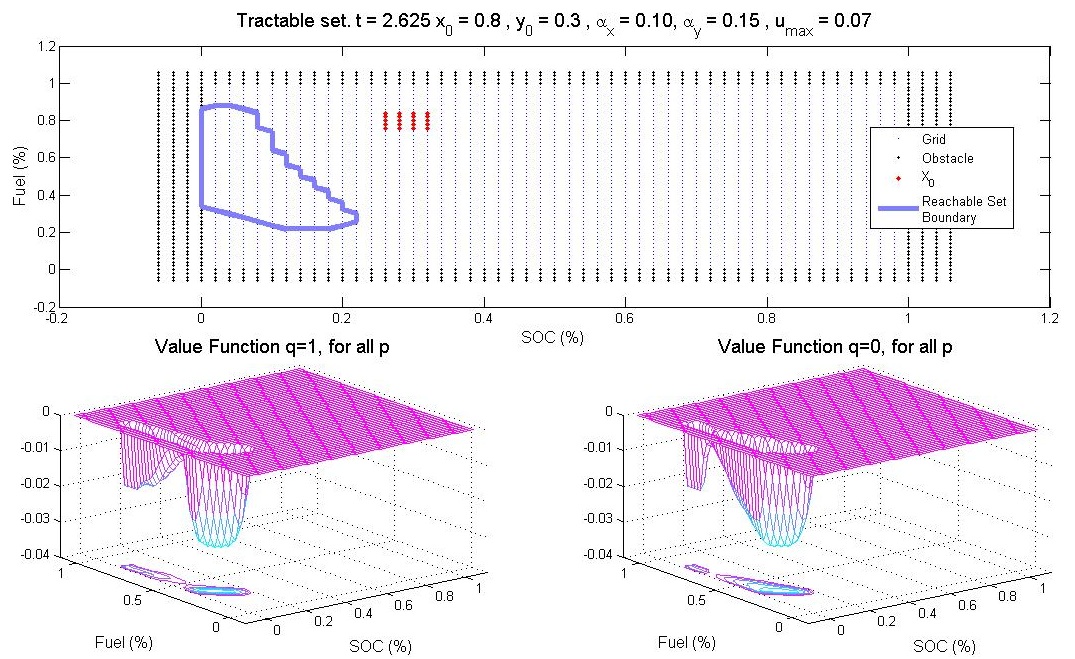} 
			\caption{Reachable set and value functions at $s=2.65$. }
			\label{results1}
		\end{center}
	\end{figure}	
		\footnotetext[1]{Intel Xeon E5504 @ $2\times 2.00$GHz, $2.99$Gb RAM.}

	\begin{figure}[!htpb]
		\begin{center}
			\includegraphics[width=1\columnwidth]{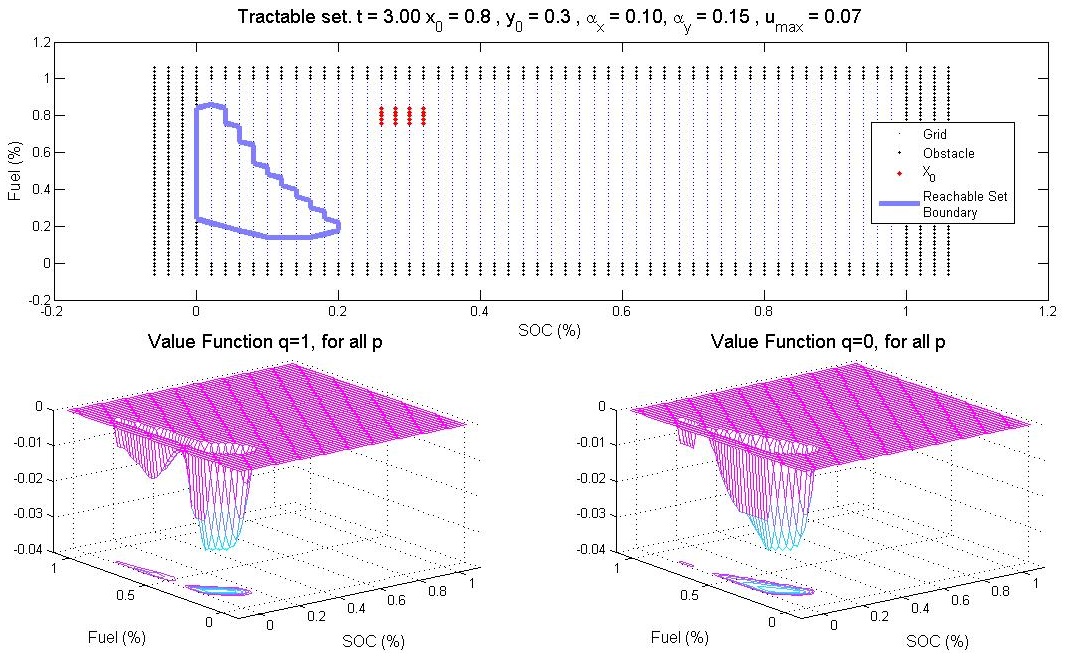} 
			\caption{Reachable set and value functions at $s=3.00$. }
			\label{results2}
		\end{center}
	\end{figure}	
	
	\begin{figure}[!htpb]
		\begin{center}
			\includegraphics[width=1\columnwidth]{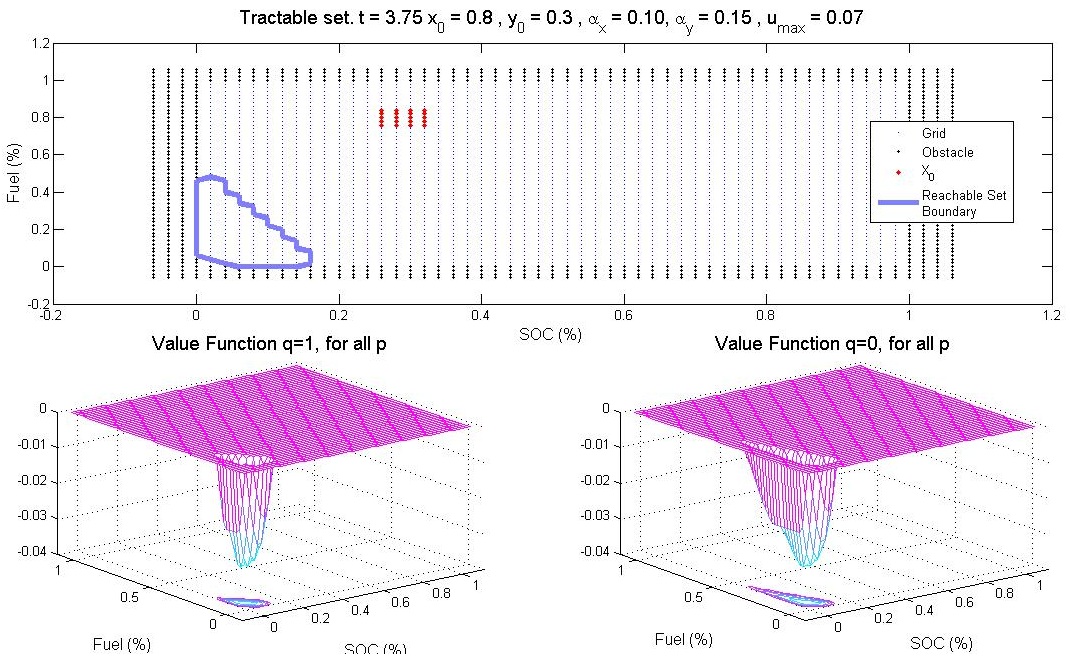} 
			\caption{Reachable set and value functions at $s=3.75$. }
			\label{results3}
		\end{center}
	\end{figure}

\newpage
\clearpage
\bibliography{journal_reachability}

\end{document}